\newtheorem{Def}{Definition}[section]
\newtheorem{Prop}[Def]{\bf Proposition}
\newtheorem{Lemma}[Def]{\bf Lemma}
\newtheorem{Teo}[Def]{\bf Theorem}
\newtheorem{Obs}[Def]{\bf Observation}
\newtheorem{Example}[Def]{\bf Example}
\newcommand{\rr}{\Rightarrow} 
\newcommand{\sN}{\mbox{$\cal SN$}} 
\newcommand{\sH}{\mbox{$\cal SH$}} 
\newcommand{\He}{\mbox{$\cal H$}} 
\newcommand{\Ne}{\mbox{$\cal N$}} 
\newcommand{\dsh}{\mbox{$\cal DSH$}} 
\newcommand{\dsn}{\mbox{$\cal DSN$}} 
\newcommand{\ton}{\mbox{$\ \to_\text {\tiny N}\ $}} 
\newcommand{\toh}{\mbox{$\ \Rightarrow_\text {\tiny H}\ $}} 
\newcommand{\class}[1]{[\![#1]\!]} 
\newcommand{\Ds}{\mathcal{D}_s}
\newcommand{\tic}{^\dagger} 
\newcommand{\ba}{\mathbf{A}}
\newenvironment{proof}{\noindent\bf Proof. \rm}{\hfill$ \blacksquare$}
\numberwithin{equation}{section}
\title{A Categorial Equivalence for semi-Nelson algebras}
\author[1]{Juan Manuel Cornejo}
\author[1]{Andr\'es Gallardo}
\author[1,*]{Ignacio Viglizzo}
\affil[1]{INMABB UNS CONICET, Departamento de Matem\'atica - Av. Alem 1253, Bahía Blanca, Buenos Aires, Argentina.}
\affil[*]{Corresponding author: {\tt viglizzo@gmail.com}, ORCiD:0000-0002-5303-623X }
\begin{document}

\maketitle

\begin{abstract}
We present a category equivalent to that of semi-Nelson algebras. The objects in this category are pairs consisting of a semi-Heyting algebra and one of its filters. The filters must contain all the dense elements of the semi-Heyting algebra and satisfy an additional technical condition. We also show that in the case of dually hemimorphic semi-Nelson algebras, the filters are not necessary and the category is equivalent to that of dually hemimorphic semi-Heyting algebras.
\end{abstract}

\section{Introduction}

Semi-Heyting algebras were introduced in 1985 by H.~Sankappanavar \cite{sankappanavar1985semi} as a variety generalizing that of Heyting algebras while keeping many of its good features like being distributive pseudocomplemented lattices and having their congruences determined by filters. 

On the other hand, D.~Vakarelov provided in \cite{vakarelov77Nlattices} a way of constructing Nelson algebras from Heyting ones, by means of what is now known as twist product, thus extending work by Kalman \cite{kalman58lattices}. This program was continued by Sendlewski \cite{sendlewski1984investigations}, \cite{sendlewski90nelson}, who gave a full representation of Nelson algebras using Heyting algebras and a boolean congruence on them. Later, L.~Monteiro and I.~Viglizzo \cite{viglizzo99algebras} and \cite{monteiro19construction}, used filters instead of congruences to study this representation.

It turned out that another one of the nice features of semi-Heyting algebras is that Vakarelov's construction can be carried out on them as well. The resulting algebras form the variety of semi-Nelson algebras \cite{cornejo16semiNelson}. Vakarelov's construction gives a categorial equivalence between semi-Heyting algebras and \textit{centered} semi-Nelson algebras \cite{cornejo18categorical}, but this leaves out many semi-Nelson algebras. In this work we prove that considering pairs of semi-Heyting algebras and one of its filters satisfying some extra conditions, we can construct all the semi-Nelson algebras. This gives a good representation that can be seen as a categorial equivalence, and permits understanding semi-Nelson algebras in terms of the better-known semi-Heyting algebras.

H.~Sankappanavar defined in \cite{sankappanavar2011expansions} the variety of semi-Heyting algebras with a dual hemimorphism, or dually hemimorphic semi-Heyting algebras,  $\dsh$, as an expansion of semi-Heyting algebras with an unary operator that is a common generalization of both De Morgan and pseudocomplemented algebras. J.~M.~Cornejo and H.~San Mart\'in applied Vakarelov's construction to this expansion to obtain the variety of dually hemimorphic semi-Nelson algebras, $\dsn$\cite{cornejo18categorical}. We prove here that dually hemimorphic semi-Nelson algebras are centered, and therefore they can be represented by dually hemimorphic semi-Heyting algebras.

The paper is organized as follows: Section 2 introduces the varieties of algebras we will be dealing with, recalls some of the basic results on them that we will be using, and provides background on Vakarelov's construction. Section 3 details the representation of semi-Nelson algebras by pairs consisting of a semi-Heyting algebra and one of its filters containing the dense elements of the algebra and satisfying an additional condition, proving this representation is an equivalence of two categories. The final section deals with the case of dually hemimorphic semi-Nelson algebras.

\section{Preliminaries}

\subsection{Definitions and elementary properties}\label{02022020}

In this section we will recall the definitions and some basic properties of the varieties of Heyting and semi-Heyting algebras,  Nelson and semi-Nelson algebras, pseudocomplemented lattices, and dually hemimorphic semi-Heyting and semi-Nelson algebras. 

\begin{Def}\label{defsemihey}
	
	A \emph{semi-Heyting algebra} is an algebra $\mathbf{H}=\langle H; \land,\lor,\rr,0,1 \rangle$ of type $(2,2,2,0,0)$ such that the following conditions are satisfied for all $x,y,z\in A$:
	
	\begin{enumerate}[(SH1)]
		\item $\langle H,\land,\lor,0,1 \rangle$ is a bounded lattice,
		\item $x\land(x\rr y)=x\land y$, \label{infimoeimplicaSH}
		\item $x\land(y\rr z)=x\land((x\land y)\rr(x\land z))$, \label{meterinfimoadentrodeimplicaSH}
		\item $x\rr x=1$. \label{autoimplicaSH}
	\end{enumerate}
	
	A semi-Heyting algebra is a \emph{Heyting algebra} if it satisfies the identity:
	\begin{enumerate}
	    \item [(H)] $(x\land y)\rr x=1$.
	\end{enumerate}
\end{Def}

On a semi-Heyting algebra $\mathbf{H}$ one can always define the term $x\toh y=x\rr(x\land y)$. With this operation, the system $\langle H,\land,\lor,\toh,0,1 \rangle$ is a Heyting algebra \cite{Abad2013semi}.

Nelson algebras are the algebraic counterpart of D. Nelson's constructive logic with strong negation. The equations presented here were proved to be a complete and independent axiomatization in \cite{monteiro96axiomes}:

\begin{Def}\label{defnel}
	
	A \emph{Nelson algebra} is an algebra $\mathbf{A}=\langle A; \land,\lor,\to,\sim,1 \rangle$ of type $(2,2,2,1,0)$ such that the following conditions are satisfied for all $x,y,z\in A$:
	
	\begin{enumerate}[(N1)]
		\item $x\land(x\lor y)=x$, \label{primercondicretN}
		\item $x\land(y\lor z)=(z\land x)\lor(y\land x)$, \label{segundacondicretN}
		\item $\sim\sim x=x$, \label{involucionN}
		\item $\sim(x\land y)=\sim x\lor \sim y$, \label{deMenInfN}
		\item $x\land \sim x=(x\land \sim x)\land (y\lor \sim y)$, \label{kleeneN}
		\item $x\to x=1$, \label{ximpxigual1N}
		\item $x\to(y\to z)=(x\land y)\to z$, \label{relacImplicaInf2N}
		\item $x\land (x\to y)=x\land(\sim x\lor y)$. \label{relacImplicaInfN}
	\end{enumerate}
\end{Def}

The variety of semi-Nelson algebras  was introduced in \cite{cornejo16semiNelson} as a generalization of Nelson algebras.

\begin{Def} \label{defseminel}
A \emph{semi-Nelson algebra} is an algebra $\mathbf{A}=\langle A; \land, \lor,\to, \sim, 1 \rangle$ of type $(2,2,2,1,0)$ such that the following conditions are satisfied for all $x,y,z\in A$:

    \begin{enumerate}[(SN1)]
    	\item $x\land(x\lor y)=x$, \label{identidad_absorc}
    	\item $x\land(y\lor z)=(z\land x) \lor(y\land x)$, \label{identidad_distrib_inf}
    	\item $\sim\sim x=x$, \label{identidad_doble_neg}
    	\item $\sim(x\land y)=\sim x\lor\sim y$, \label{identidad_ditrib_neg}
    	\item $x\land\sim x=(x\land\sim x)\lor(y\lor\sim y)$, \label{identidad_kleene}
    	\item $x\land(x\ton y)=x\land(\sim x\lor y)$, \label{identidad_implic_negacion}
    	\item $x\ton(y\ton z)=(x\land y)\ton z$, \label{identidad_implica_infimo}
    	\item $(x\ton y)\ton[(y\ton x)\ton[(x\to z)\ton(y\to z)]]=1$,
    	\item $(x\ton y)\ton[(y\to_N x)\ton[(z\to x)\ton(z\to y)]]=1$,
    	\item $(\sim(x\to y))\ton(x\land\sim y)  =  1$,\label{identidad_semi_neg_implica_impl}
    	\item $(x\land\sim y)\ton(\sim(x\to y))  =  1$,\label{identidad_semi_impl_implica_neg}
    \end{enumerate}
where $x \ton y := x \to (x \land y)$.
\end{Def}

For a semi-Nelson algebra $\mathbf{A}$, the system $\langle A,\land,\lor,\ton,\sim,1 \rangle$ is a Nelson algebra \cite{cornejo16semiNelson}.

We will denote by $\He$, $\sH$, $\Ne$ and $\sN$ the varieties of Heyting, semi-Heyting, Nelson and semi-Nelson algebras respectively.

Heyting algebras, as is well known, are pseudocomplemented (with $x^*=x\to 0$ as the pseudocomplement of $x\in H$) in the sense of the following definition:

\begin{Def}\label{defpseudocomplementedlattices}
An algebra $\mathbf{A}=\langle A; \land,\lor,^*,0,1\rangle$ is a \emph{pseudocomplemented lattice} if the following conditions hold:

\begin{enumerate}[PS1)]
    \item $\langle A; \land,\lor,0,1\rangle$ is a bounded lattice,
    \item $x\land(x\land y)^*=x\land y^*$,
    \item $0^*=1$ and $1^*=0$.
\end{enumerate}
\end{Def}

If we define in a semi-Heyting algebra the term $x^c=x\rr0$, it turns out that $x^c$ is the pseudocomplement of $x$ in the sense of the previous definition. Moreover, $x\toh 0=x\Rightarrow (x\land 0)=x\Rightarrow 0$,
so $x^*$ and $x^c$ coincide in $\sH$. From now on we will use indistinctly the notation $x^*$ for both $x^c$ and $x^*$.

We will use some properties of pseudocomplemented lattices. Their proof can be found in \cite{balbes1974distributive}:

\begin{Prop}
If $\mathbf A=\langle A; \land,\lor,^*,0,1\rangle$ is a pseudocomplemented lattice, then for every $x, y \in A$, if $x\land y=0$, then $x\leq y^*$. It also holds that  $(x\lor y)^*=x^*\land y^*$.
\end{Prop}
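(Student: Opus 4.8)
The plan is to derive from the axioms the familiar ``Galois'' description of the pseudocomplement, namely $a\land b=0 \iff a\le b^*$, and then read off both assertions from it.

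First I would record the auxiliary identity $x\land x^*=0$. Instantiating PS2 with $y:=1$ gives $x\land(x\land 1)^*=x\land 1^*$; since $x\land 1=x$ and $1^*=0$ by PS3, the right-hand side is $x\land 0=0$, as wanted. Next I would prove $a\land b=0\Rightarrow a\le b^*$ (which is already the first statement of the Proposition): from $a\land b=0$ we get $(a\land b)^*=0^*=1$ by PS3, so PS2 with $x:=a,\ y:=b$ yields $a=a\land 1=a\land(a\land b)^*=a\land b^*$, i.e.\ $a\le b^*$. I would also note the converse, needed below: if $a\le b^*$ then $a\land b\le b^*\land b=0$ by the auxiliary identity.

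For the identity $(x\lor y)^*=x^*\land y^*$ I would prove the two inequalities separately. For ``$\le$'': since $x\le x\lor y$ we have $x=x\land(x\lor y)$, hence $x\land(x\lor y)^*=x\land(x\lor y)\land(x\lor y)^*=x\land 0=0$ using the auxiliary identity for $x\lor y$; by the characterization $(x\lor y)^*\le x^*$, and symmetrically $(x\lor y)^*\le y^*$, so $(x\lor y)^*\le x^*\land y^*$. For ``$\ge$'': by the characterization it suffices to check $(x^*\land y^*)\land(x\lor y)=0$; distributing, this equals $\bigl((x^*\land y^*)\land x\bigr)\lor\bigl((x^*\land y^*)\land y\bigr)$, and each disjunct is $0$ because $(x^*\land y^*)\land x\le x^*\land x=0$ and likewise with $y$.

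The one point that deserves comment is that this last step uses distributivity of the lattice, which is not literally listed in PS1 (and indeed the ``$\ge$'' inclusion can fail in a non-distributive pseudocomplemented lattice such as the pentagon); this is harmless here, since the cited source works in the distributive setting and every pseudocomplemented lattice occurring in this paper is a reduct of a (semi-)Heyting algebra, hence distributive. Everything else is a routine unwinding of PS2, PS3 and the lattice laws, so I expect no real obstacle.
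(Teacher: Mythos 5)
Your derivation of the Galois equivalence $a\land b=0\iff a\le b^*$ from PS2--PS3 (via the auxiliary identity $x\land x^*=0$) is correct, and it already gives the first assertion; the inclusion $(x\lor y)^*\le x^*\land y^*$ is also fine and distributivity-free. Note that the paper gives no proof of its own here --- it only cites Balbes--Dwinger --- so the real question is whether your argument proves the Proposition as stated, and it does not quite: the step $(x^*\land y^*)\land(x\lor y)=\bigl((x^*\land y^*)\land x\bigr)\lor\bigl((x^*\land y^*)\land y\bigr)$ uses distributivity, which is not part of Definition \ref{defpseudocomplementedlattices} (PS1 only requires a bounded lattice), so as written you only prove the identity under an extra hypothesis. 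Worse, the claim you use to justify this restriction --- that the inclusion $x^*\land y^*\le(x\lor y)^*$ can fail in the pentagon --- is false: in $N_5$ (with $0<a<b<1$ and $c$ incomparable to $a,b$) one has $a^*=b^*=c$, $c^*=b$, the axioms PS1--PS3 hold, and the identity $(x\lor y)^*=x^*\land y^*$ is easily checked; in fact this de Morgan law holds in \emph{every} pseudocomplemented lattice (it is the dual law $(x\land y)^*=x^*\lor y^*$ that genuinely needs extra hypotheses).

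The distributivity-free argument is available from what you have already proved. The equivalence $a\land b=0\iff a\le b^*$ gives antitonicity of $^*$ (if $a\le b$ then $b^*\land a\le b^*\land b=0$, so $b^*\le a^*$) and the inflation $a\le a^{**}$ (from $a^*\land a=0$). Now put $w=x^*\land y^*$. From $w\le x^*$ we get $x\le x^{**}\le w^*$, and similarly $y\le w^*$; hence $x\lor y\le w^*$, so $w\land(x\lor y)\le w\land w^*=0$, and the Galois equivalence yields $w\le(x\lor y)^*$, which is the missing inequality. Replacing your last step by this argument makes the proof correct in the full generality of the statement, with no appeal to the ambient distributive setting of the paper (your fallback that all pseudocomplemented lattices occurring in the paper are reducts of semi-Heyting algebras is true, but unnecessary).
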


\begin{Def} \label{dense_element}
Let $\mathbf{A}=\langle A; \land,\lor,^*,0,1\rangle$ be a pseudocomplemented lattice. We say that an element $a\in A$ is \emph{dense} if $a^*=0$. We denote by $\Ds(A)$ the set of all dense elements of $A$.
\end{Def}

The following characterization of dense elements is going to be useful:

\begin{Lemma}\label{caractdensos}
If $\mathbf H\in\sH$, then $x\in\Ds(H)$ if and only if $x=y\lor y^*$ for some $y\in H$.
\end{Lemma}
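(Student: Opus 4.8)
The plan is to prove both directions of the biconditional, with the reverse implication being essentially immediate and the forward implication requiring the characteristic semi-Heyting identities.

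First I would dispatch the easy direction. Suppose $x = y \lor y^*$ for some $y \in H$. Using the property from the Proposition that $(y \lor y^*)^* = y^* \land y^{**}$, I need to show $y^* \land y^{**} = 0$. This is a standard fact in pseudocomplemented lattices: since $y \land y^* = 0$, we have $y^* \leq y^{**}$ is not what I want — rather, from $y^{**} \land y^* = 0$ (because $y^{**} = (y^*)^*$ is the pseudocomplement of $y^*$, so $y^* \land y^{**} = 0$ directly by the defining property, or via PS2 with the element $y^*$). Hence $x^* = y^* \land y^{**} = 0$, so $x$ is dense.

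For the forward direction, assume $x \in \Ds(H)$, i.e. $x^* = 0$; I must exhibit a witness $y$ with $x = y \lor y^*$. The natural guess is to take $y = x$ itself, hoping that $x \lor x^* = x$. Since $x^* = 0$, this reduces to showing $x \lor 0 = x$, which is trivially true in any bounded lattice. So in fact $y = x$ works immediately and the whole lemma is nearly trivial: $x$ dense implies $x^* = 0$ implies $x = x \lor 0 = x \lor x^*$. I would double-check that no subtlety is hiding — the only place semi-Heyting structure (as opposed to general pseudocomplemented lattices) could enter is if the intended statement were about $\toh$ versus $\to$, but the excerpt already established $x^*$ is unambiguous in $\sH$, so there is nothing more to verify.

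The main "obstacle," then, is really just making sure the statement is being read correctly — it is a genuinely elementary lemma whose content is that density is equivalent to being of the form $y \lor y^*$, and both directions follow from $x^* = 0 \iff x \lor x^* = x$ together with the Proposition's formula $(y \lor y^*)^* = y^* \land y^{**}$ and the basic identity $y^* \land y^{**} = 0$. I would write the proof in two or three lines, citing the Proposition for the reverse direction and using only lattice bounds for the forward one. If the paper intends a less trivial reading (for instance requiring the witness to range over a restricted set), I would revisit, but as stated the argument is short.
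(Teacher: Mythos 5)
Your proof is correct: the reverse direction follows from $(y\lor y^*)^*=y^*\land y^{**}=0$ and the forward direction from taking $y=x$, since $x^*=0$ gives $x=x\lor x^*$. The paper states this lemma without proof, treating it as routine, and your argument is exactly the standard one it relies on.
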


For the results in section \ref{duallyhemimorphicsection}, we now define the varieties of dually hemimorphic semi-Heyting and semi-Nelson algebras. The former were introduced by H.P. Sankappanavar in \cite{sankappanavar2011expansions}, and the latter were presented in \cite{cornejo19dually}.

\begin{Def}\label{defdualsemiheyting}
An algebra $\mathbf{A}=\langle A; \land,\lor,\to,\tic,0,1 \rangle$ of type $(2,2,2,1,0,0)$ is said to be a \emph{dually hemimorphic semi-Heyting algebra} if $\langle A; \land,\lor,\to,0,1 \rangle$ is a semi-Heyting algebra and the following equations are satisfied:

\begin{enumerate}[DSM1)]
    \item $0\tic=1$,\label{op_tilde_0}
    \item $1\tic=0$,\label{op_tilde_1}
    \item $(x\land y)\tic=x\tic\lor y\tic$.\label{distib_tilde_inf}
\end{enumerate}
\end{Def}

We write $\dsh$ to denote the variety of dually hemimorphic semi-Heyting algebras.

\begin{Def}\label{defdualseminelson}
An algebra $\mathbf{A}=\langle A; \land,\lor,\to,\sim,',1 \rangle$ of type $(2,2,2,1,0)$ is said to be a \emph{dually hemimorphic semi-Nelson algebra} if $\langle A; \land,\lor,\to,\sim,1 \rangle$ is a semi-Nelson algebra and the following equations are satisfied:

\begin{enumerate}[DSN1)]
    \item $(\sim 1)'=1$,\label{dhsn_1}
    \item $1'\to(\sim1)=1$,\label{dhsn_2}
    \item $((x\to y)\land(y\to x)\land x')\to((x\to y)\land(y\to x)\land y)'=1$,\label{dhsn_3}
    \item $\sim x'\to(\sim x\land(x'\to x))=1$,\label{dhsn_4}
    \item $(\sim x\land(x'\to x))\to\sim x'=1$,\label{dhsn_5}
    \item $(x\land y)'\to(x'\lor y')=1$,\label{dhsn_6}
    \item $(x'\lor y')\to(x\land y)'=1$.\label{dhsn_7}
\end{enumerate}

We use the convention that the unary operation $'$ has higher priority than $\sim$, so the expression $\sim x'$ means $\sim(x')$.
\end{Def}

We write $\dsn$ to denote the variety of dually hemimorphic semi-Nelson algebras.

\subsection{Vakarelov's Construction} \label{vakarelovconstruction}\label{quotients}

In this section we review some well known results that established the connection between Heyting and Nelson algebras, and later allowed the definition of (dually hemimorphic) semi-Nelson algebras as a variety constructed from the one of (dually hemimorphic) semi-Heyting algebras. 

Let $\mathbf{A}\in\He$. We denote with $V_k(A)$ the set $\{ (a,b)\in A^2 : a\land b=0 \}$ and with $\mathbf{V_k(A)}$ the system $\langle V_k(A); \sqcap,\sqcup,\to,\sim,\top \rangle$ algebrized in the following way

    \begin{enumerate}[(V1)]
        \item $(a,b)\sqcap(c,d)=(a\land c,b\lor d)$,
        \item $(a,b)\sqcup(c,d)=(a\lor c,b\land d)$,
        \item $(a,b)\to(c,d)=(a\rr c,a\land d)$,
        \item $\sim (a,b)=(b,a)$,
        \item $\top=(1,0)$.
    \end{enumerate}

\begin{Obs}
    From the previous definitions we can deduce the rule $$(\mbox{\rm V}6)\ (a,b)\ton(c,d)=(a\toh c,a\land d).$$
\end{Obs}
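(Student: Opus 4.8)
The plan is to verify the displayed identity $(a,b)\ton(c,d)=(a\toh c,\,a\land d)$ directly by unfolding the definition $x\ton y := x\to(x\land y)$ in the algebra $\mathbf{V_k(A)}$, using the rules (V1)--(V5) together with the fact that $\langle A,\land,\lor,\toh,0,1\rangle$ is a Heyting algebra (recalled just before Definition~\ref{defnel}). First I would compute the inner meet: by (V1), $(a,b)\sqcap(c,d)=(a\land c,\,b\lor d)$. Then, applying (V3) with first coordinate $a$ and second coordinate $b$ to the pair $(a\land c,\,b\lor d)$, I get
\[
(a,b)\ton(c,d)=(a,b)\to(a\land c,\,b\lor d)=\bigl(a\rr(a\land c),\ a\land(b\lor d)\bigr).
\]
Now the first coordinate is exactly $a\toh(a\land c)$... no, more directly, $a\rr(a\land c)$ is by definition $a\toh c$, so the first coordinate is $a\toh c$ as desired. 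For the second coordinate I must show $a\land(b\lor d)=a\land d$. Here I would use that $(a,b)\in V_k(A)$ means $a\land b=0$, together with distributivity in the lattice reduct of the Heyting algebra $\mathbf A$: $a\land(b\lor d)=(a\land b)\lor(a\land d)=0\lor(a\land d)=a\land d$.

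The only point requiring a little care is confirming that $\mathbf{V_k(A)}$ is well defined at the relevant step, i.e. that the pair $(a\land c,\,b\lor d)$ indeed lies in $V_k(A)$ so that (V3) applies to it; this is immediate since $(a\land c)\land(b\lor d)\le (a\land b)\lor(c\land d)=0$ using $a\land b=0$ and $c\land d=0$. There is no real obstacle here: the statement is a one-line consequence of the definitions, and the ``hard part'' is merely bookkeeping — keeping straight that in (V3) the operation appearing in the first coordinate is $\rr$, and that $x\rr(x\land y)$ is precisely the term denoted $x\toh y$ in a semi-Heyting (here Heyting) algebra. Thus the identity (V6) follows, completing the observation.
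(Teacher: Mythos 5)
Your proposal is correct and follows exactly the intended (and only natural) derivation: unfold $x\ton y:=x\to(x\land y)$ in the twist structure, apply (V1) and (V3), identify $a\rr(a\land c)$ with $a\toh c$, and simplify $a\land(b\lor d)=a\land d$ via distributivity and $a\land b=0$. The paper states the observation without proof precisely because this is the calculation it has in mind, so there is nothing to add.
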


It was shown in \cite{vakarelov77Nlattices} that $\mathbf{V_k(A)}\in\sN$.

Vakarelov's construction can be generalized the following way: if $\mathbf{A}\in\He$, and $F$ is a filter of $\mathbf{A}$, we define the structure $$N(A,F):=\{(a,b)\in A^2 : a\land b=0, a\lor b\in F \}.$$ It was proved in \cite{viglizzo99algebras,monteiro19construction} that $N(A,F)$, algebrized with (V1)-(V5), is well defined and it is a Nelson algebra. This algebra will be denoted by $\mathbf{N(A,F)}$.

A different generalization was to start from a semi-Heyting algebra, which lead to the definition of semi-Nelson algebras \cite{cornejo16semiNelson}: if $\mathbf{A}$ is a semi-Heyting algebra, then $\mathbf{V_k(A)}$ is a semi-Nelson algebra.

Going in the other direction, we can go from semi-Nelson algebras to semi-Heyting algebras by means of the following quotient:

Let $\mathbf{A}= \langle A, \land,\lor, \to, \sim, 1\rangle\in\sN$, and define:
$$x\equiv y\mbox{ if and only if } x\to y = 1\mbox{ and } y\to x = 1.$$

It can be proved that ``$\equiv$'' is an equivalence relation and a congruence with respect of the operations $\land$, $\lor$ and $\to$. We denote with $\class{a}$ the equivalence class of an element $a\in A$ under the relation $\equiv$. 

We consider $\mathbf{sH}(\mathbf{A}) = \langle A/_\equiv, \cap,\cup, \Rightarrow, \bot, \top\rangle$, where
\begin{multicols}{2}
	\begin{itemize}
		\item $\bot = \class{\sim 1}$,
		\item $\top = \class{1}$,
		\item $\class{x} \cap \class{y} = \class{x \land y}$,
		\item $\class{x} \cup \class{y} = \class{x \lor y}$,
		\item $\class{x} \rr \class{y} = \class{x \to y}$.
	\end{itemize}
\end{multicols}

Then $\mathbf{sH}(\mathbf{A})\in\sH$ (\cite{cornejo16semiNelson}).

\vspace{5mm}

We will use the next result in the following sections:

\begin{Prop}\label{propertiesseminelson} {\rm \cite[Lemma 2.7 (h) and (l)]{cornejo16semiNelson}}
Let $\mathbf{A}\in\sN$, and $a,b,c\in A$. Then the following properties hold:
\begin{enumerate}[(1)]
    \item $a\ton b=b\ton a=1$ if and only if $a\to b=b\to a=1$,\label{equivalenciadeclasesnelson}
    \item If $a\ton b=1=b\ton c=1$, then $a\ton c=1$.\label{transitivitynelson}
\end{enumerate}
\end{Prop}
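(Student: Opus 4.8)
The plan is to reduce both statements to the order of the semi-Heyting quotient $\mathbf{sH}(\mathbf{A})$. First I would isolate the one point that carries the real content: the class $\class{1}$ is a singleton, i.e. $w \equiv 1$ implies $w = 1$, which I would obtain by showing $1 \to y = y$ for all $y \in A$. Recall that $\langle A,\land,\lor,\ton,\sim,1\rangle$ is a Nelson algebra, so $1$ is its greatest element and $\sim 1$ its least one (the latter also follows directly, since $\sim$ is an order-reversing involution by (SN3)--(SN4) and $\sim w \le 1$ always). Then (SN6) with $x := 1$ gives $1 \ton y = 1 \land (1 \ton y) = 1 \land (\sim 1 \lor y) = y$, and since $1 \land y = y$ we conclude $1 \to y = 1 \to (1 \land y) = 1 \ton y = y$; in particular $1 \to w = 1$ forces $w = 1$.

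Next I would prove a bridge lemma: for all $a, b \in A$, one has $a \ton b = 1$ if and only if $\class{a} \le \class{b}$ in $\mathbf{sH}(\mathbf{A})$. Since $\equiv$ is a congruence for $\land$ and $\to$, we have $\class{a \ton b} = \class{a \to (a \land b)} = \class{a} \rr (\class{a} \cap \class{b})$. If $a \ton b = 1$, this class is $\class{1} = \top$, so applying (SH2) in $\mathbf{sH}(\mathbf{A})$ with $u := \class{a}$ and $v := \class{a} \cap \class{b}$ gives $\class{a} = \class{a} \cap (\class{a} \rr v) = \class{a} \cap v$, i.e. $\class{a} \le \class{b}$. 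Conversely, if $\class{a} \le \class{b}$ then $\class{a} \cap \class{b} = \class{a}$, so $\class{a \ton b} = \class{a} \rr \class{a} = \top$ by (SH4), hence $a \ton b \equiv 1$, and the first step upgrades this to $a \ton b = 1$.

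With the bridge lemma in hand, both parts are immediate. For (1): $a \ton b = b \ton a = 1$ iff $\class{a} \le \class{b}$ and $\class{b} \le \class{a}$, iff $\class{a} = \class{b}$, iff $a \equiv b$, iff $a \to b = b \to a = 1$. For (2): $a \ton b = 1$ and $b \ton c = 1$ give $\class{a} \le \class{b} \le \class{c}$, hence $\class{a} \le \class{c}$ and therefore $a \ton c = 1$. The only facts used about $\mathbf{sH}(\mathbf{A})$ are its membership in $\sH$, recorded above, and the identities (SH2) and (SH4); the sole semi-Nelson-specific input is (SN6). I expect the first step, $\class{1} = \{1\}$, to be the genuine obstacle: it is exactly what turns the cheap conclusion $a \ton b \equiv 1$, which the quotient construction hands over for free, into the literal equality $a \ton b = 1$ that the statement demands.
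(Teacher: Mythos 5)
Your derivation is correct, and every step checks out against the paper's stated background: with $1$ the top and $\sim 1$ the bottom of the Nelson reduct $\langle A,\land,\lor,\ton,\sim,1\rangle$, axiom (SN\ref{identidad_implic_negacion}) with $x=1$ indeed gives $1\ton y=y$, hence $1\to y=1\to(1\land y)=1\ton y=y$ and $\class{1}=\{1\}$; your bridge lemma ($a\ton b=1$ iff $\class{a}\le\class{b}$ in $\mathbf{sH}(\mathbf{A})$) then follows from the congruence properties of $\equiv$ together with (SH\ref{infimoeimplicaSH}) and (SH\ref{autoimplicaSH}), and both items (1) and (2) drop out immediately. Note, however, that the paper gives no proof of this statement at all: it is imported from \cite[Lemma 2.7 (h) and (l)]{cornejo16semiNelson}, where it is obtained by direct equational computation from (SN1)--(SN11), and in that source such arithmetic properties come \emph{before} --- and are used to establish --- precisely the facts you treat as black boxes, namely that the $\ton$-reduct is a Nelson algebra, that $\equiv$ is an equivalence and a congruence for $\land$, $\lor$, $\to$, and that $\mathbf{sH}(\mathbf{A})\in\sH$. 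So, read as a replacement for the original proof, your route is circular with respect to the original development; read inside the present paper, where those quotient results are quoted as known, it is a legitimate and genuinely different argument. What it buys is conceptual economy: it correctly isolates the only nontrivial obstruction, namely that the quotient construction only hands you $a\ton b\equiv 1$, and that the identity $1\to y=y$ (equivalently, $\class{1}$ being a singleton) is exactly what upgrades this to the literal equality $a\ton b=1$; what it costs is self-containedness, which the direct computational proof in the cited source retains.
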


Part (1) of the previous proposition  tells us that the congruence $\equiv$ is the same as the one that can be defined if we replace $\to$ with $\ton$.

The following representation theorem  establishes the connection between semi-Heyting and semi-Nelson algebras using Vakarelov's construction and the quotient by the relation $\equiv$. 

\begin{Teo} \label{isosemiHeytingsemiNelson} {\rm (\cite[Theorem 5.3 and Corollary 5.2]{cornejo16semiNelson})}
If $\mathbf{A}\in\sH$, then $\mathbf{A}$ is isomorphic to $\mathbf{sH}(\mathbf{V_k}(\mathbf A))$ through the isomorphism $g(a)=\class{(a,a^*)}$. Also, if $\mathbf{A}\in\sN$, then $\mathbf{A}$ is isomorphic to a subalgebra of $\mathbf{V_k(sH(\mathbf A))}$ through the application $h(a)=(\class{a},\class{\sim a})$.
\end{Teo}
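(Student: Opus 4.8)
The plan is to handle the two assertions separately; in each case I check that the displayed map is well defined, a homomorphism, and injective, and for $g$ also surjective.

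\emph{The semi-Heyting case.} Fix $\mathbf A\in\sH$. Since $a\land a^*=0$, the pair $(a,a^*)$ belongs to $V_k(A)$, so $g(a)=\class{(a,a^*)}$ makes sense; recall $\mathbf{V_k}(\mathbf A)\in\sN$ and hence $\mathbf{sH}(\mathbf{V_k}(\mathbf A))\in\sH$. To see $g$ is a homomorphism I would evaluate each operation of $\mathbf{V_k}(\mathbf A)$ on pairs of the form $(a,a^*)$ and compare. The constants are immediate: $(1,1^*)=(1,0)=\top$ and $(0,0^*)=(0,1)=\sim(1,0)$, so $g(1)=\top$ and $g(0)=\bot$. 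For $\lor$ one uses $(a\lor b)^*=a^*\land b^*$, giving $(a,a^*)\sqcup(b,b^*)=(a\lor b,(a\lor b)^*)$ literally. For $\land$ and $\rr$ the relevant pairs agree only up to $\equiv$: from $a^*\lor b^*\le(a\land b)^*$ one gets $(a\land b)\land(a^*\lor b^*)=0$, so both $(a\land b,(a\land b)^*)\to(a\land b,a^*\lor b^*)$ and its converse equal $\top$; and from (SH2), $(a\rr b)\land a\land b^*=a\land b\land b^*=0$, which likewise gives $(a\rr b,(a\rr b)^*)\equiv(a\rr b,a\land b^*)$. For injectivity: if $g(a)=g(b)$ then $(a,a^*)\to(b,b^*)=(a\rr b,a\land b^*)=\top$, so $a\rr b=1$ and hence $a=a\land(a\rr b)=a\land b$ by (SH2), i.e.\ $a\le b$; symmetrically $b\le a$. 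For surjectivity: given $(x,y)\in V_k(A)$ we have $y\le x^*$, so $(x,y)\to(x,x^*)=(1,x\land x^*)=\top$ and $(x,x^*)\to(x,y)=(1,x\land y)=\top$, whence $\class{(x,y)}=g(x)$.

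\emph{The semi-Nelson case.} Fix $\mathbf A\in\sN$; then $\mathbf{sH}(\mathbf A)\in\sH$ and so $\mathbf{V_k(sH(\mathbf A))}\in\sN$. First, $h(a)=(\class a,\class{\sim a})$ must lie in $V_k(\mathbf{sH}(\mathbf A))$, i.e.\ $\class{a\land\sim a}=\bot=\class{\sim1}$; this is exactly where the Kleene-type axiom (SN5) is used, collapsing $a\land\sim a$ to the least class. Preservation of $\land$, $\lor$, $\sim$ and $1$ is routine from (SN3)--(SN4) (for $\lor$ one first derives $\sim(a\lor b)=\sim a\land\sim b$): in each case the pair computed in $\mathbf{V_k(sH(\mathbf A))}$ matches $(\class{\cdot},\class{\sim\cdot})$ coordinatewise. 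The one delicate clause is $\to$: there $h(a)\to h(b)=(\class{a\to b},\class{a\land\sim b})$, so we must show $\class{\sim(a\to b)}=\class{a\land\sim b}$, and this is precisely what (SN10) and (SN11) yield once Proposition \ref{propertiesseminelson}(1) is invoked to pass from $\ton$ to $\to$. Thus $h$ is a homomorphism and its image is a subalgebra of $\mathbf{V_k(sH(\mathbf A))}$.

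The remaining point, injectivity of $h$, is the one I expect to be the main obstacle, since $\class a=\class b$ by itself does not force $a=b$; one must exploit the second coordinate. From $a\to b=1$ and Proposition \ref{propertiesseminelson}(1) we get $a\ton b=1$, so by (SN6) and (SN2), $a=a\land(\sim a\lor b)=(a\land\sim a)\lor(a\land b)$; and from $\sim b\to\sim a=1$, after the same manipulations followed by an application of $\sim$ together with (SN3)--(SN4), one obtains $a\land\sim b\le b$. Feeding these into the Kleene identity $a\land\sim a=(a\land\sim a)\land(b\lor\sim b)$ gives $a\land\sim a\le b$, hence $a\le b$; by the symmetric argument $b\le a$, so $a=b$. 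Organizing this chain of inequalities cleanly is the only genuinely non-mechanical part of the whole proof.
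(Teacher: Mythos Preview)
The paper does not prove this theorem in place---it is quoted from \cite{cornejo16semiNelson}---but the same maps reappear as $\eta_A$ and $\delta_A$ in the proof of the categorical equivalence in Section~3.1, and there the paper verifies well-definedness, injectivity, surjectivity, and the homomorphism clauses in essentially the way you do. Your argument is correct and matches that approach. The one substantive difference is the injectivity of $h$: the paper, having reduced to $x\ton y=y\ton x=\sim x\ton\sim y=\sim y\ton\sim x=1$, simply cites \cite[(1.15)]{viglizzo99algebras} for the conclusion $x=y$, whereas you supply an explicit derivation via (SN6), distributivity, and the Kleene inequality. That derivation is sound; the step ``$a\land\sim a\le b$'' follows because $a\land\sim a=(a\land\sim a)\land(b\lor\sim b)=(a\land\sim a\land b)\lor(a\land\sim a\land\sim b)$ and the second disjunct is below $a\land\sim b\le b$.

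One small wording issue: Proposition~\ref{propertiesseminelson}(\ref{equivalenciadeclasesnelson}) is a two-sided statement, so from $a\to b=1$ alone you cannot conclude $a\ton b=1$; you need $b\to a=1$ as well. You do have both (since $\class a=\class b$), and likewise both directions for $\sim a,\sim b$, so the argument is valid---just phrase it as applying the proposition to the pair of equalities rather than to a single one.
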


\section{Representations}\label{representations}

Theorem \ref{isosemiHeytingsemiNelson} gives a representation of any semi-Nelson algebra $\mathbf{A}$ as a subalgebra of $\mathbf{V_k}(\mathbf{sH}(\mathbf{A}))$, but there can be more than one non-isomorphic semi-Nelson algebra having the same quotient semi-Heyting algebra $\mathbf{sH}(\mathbf{A})$ (up to isomorphism).  In order to get a sharper representation we are going to consider pairs consisting of a semi-Heyting algebra and one of its filters satisfying extra conditions. To be more precise, we obtain a categorial equivalence between the category of semi-Nelson algebras and a category whose objects are pairs $(\mathbf{H}, F)$ where $\mathbf{H}$ is a semi-Heyting algebra and $F$ is a filter of $\mathbf{H}$ that contains all the dense elements of $\mathbf{H}$ and satisfies an extra condition.

Recall the construction $N(A,F)=\{(a,b)\in A^2 : a\land b=0, a\lor b\in F \}$ for some Heyting algebra $\mathbf{A}$, and $F$ a filter of $\mathbf{A}$. For any $\mathbf{A}\in\He$, we consider $\mathbf{V_k(A)}$, and a subalgebra $\mathbf{S}$ such that the projection over the first component, $\pi_1$, verifies $\pi_1(S)=A$. It was shown in \cite{viglizzo99algebras} that there is a filter $F\subseteq A$ containing the dense elements of $A$ such that $\mathbf S=\mathbf{N(A,F)}$.

To extend these results to the variety $\sH$, we define:

\begin{Def}\label{defifiltro}
Let $\mathbf{H}\in\sH$. A subset $F\subseteq H$ is an $i$-filter, if the following conditions hold:
\begin{enumerate} [\text{I}F1)]
    \item $F$ is a lattice filter of $\mathbf H$.
    \item $\Ds(H)\subseteq F$.
    \item If $z\lor t\in F$, then for all $x\in H$, $(x\rr z)\lor(x\land t)\in F$.
\end{enumerate}
\end{Def}

It is immediate from the definition that every $i$-filter is a filter. The converse is not true, as  the following example shows:

\begin{Example}\label{ejemplofiltronoifiltro}
	Consider the semi-Heyting algebra $\mathbf{A}=\langle A; \land,\lor,$ $\rr,1 \rangle$, with the operation $\rr$ indicated in the table:
	\vspace{2mm}
	
	\begin{minipage}{7cm}
		\begin{center}
			\setlength\unitlength{1mm}
			\begin{picture}(70,40)(10,-10)
			\put(45,20){\circle{2}}
 			\put(45,0){\circle{2}}
			
			\put(50,0){\makebox(0,0){$0$}}
			\put(50,20){\makebox(0,0){$1$}}

			\put(45,1){\line(0,1){18}}
			\put(35,20){$A$}
			\end{picture}
		\end{center}
	\end{minipage}
	\begin{minipage}{3cm}
		\begin{center}
			\begin{tabular}{c||c|c}
				$\Rightarrow$&$0$&$1$\\\hline\hline
				$0$&$1$&$0$\\ \hline
				$1$&$0$&$1$
			\end{tabular}
		\end{center} 
	\end{minipage}
	
The subset $F=\{1\}$ of $\mathbf A$ is a filter, and it satisfies $IF2)$ (since the only dense element is 1). But $F$ does not satisfy $IF3$) because $1\lor 0=1\in F$, but $(0\rr 1)\lor(0\land0)=0\lor0=0\notin F$. 
\end{Example}

\begin{Obs}\label{buenadefdeimplicavakarelov}
If we consider the structure $\mathbf{ N(H,F)}$, with $\mathbf{H}\in\sH$, an $i$-filter $F$, 
and $(a,b),(c,d)\in N(H,F)$, then (1) $c\land d=0$ and (2) $c\lor d\in F$. Notice that
$$(a\rr c)\land(a\land d)=a\land(a\rr c)\land d\underset{(SH\ref{infimoeimplicaSH})}{=}(a\land c)\land d=a\land(c\land d)\underset{(1)}{=}a\land 0=0.$$

Also, using (2) and $IF3)$ we have that $(a\rr c)\lor(a\land d)\in F$, so the pair $(a\rr c,a\land d)$ is in $N(H,F)$ and we can use it to define $(a,b)\to(c,d)=(a\rr c,a\land d)$.
\end{Obs}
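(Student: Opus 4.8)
The plan is to verify directly that the pair $(a\rr c,\,a\land d)$ again belongs to $N(H,F)$ whenever $(a,b),(c,d)\in N(H,F)$, since this is exactly what is required for the formula in (V3) to define an honest binary operation $\to$ on $N(H,F)$. Membership of a pair in $N(H,F)$ demands two things: that its two coordinates meet at $0$, and that their join lies in $F$. The facts $c\land d=0$ (labelled (1)) and $c\lor d\in F$ (labelled (2)) are immediate, being nothing but the two defining conditions for $(c,d)\in N(H,F)$; all the work is in establishing the two analogous conditions for $(a\rr c,\,a\land d)$.

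For the meet condition I would rearrange $(a\rr c)\land(a\land d)$ to bring out the subterm $a\land(a\rr c)$, rewrite that subterm as $a\land c$ using (SH\ref{infimoeimplicaSH}), and then collapse the expression via associativity of $\land$ together with fact (1) down to $a\land 0=0$; this is precisely the displayed chain of equalities. For the join condition the decisive ingredient is clause (IF3) of the definition of $i$-filter, which was framed for exactly this situation: instantiating it with $z:=c$, $t:=d$, $x:=a$ and feeding in $c\lor d\in F$ from (2) yields at once $(a\rr c)\lor(a\land d)\in F$.

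Combining the two verifications gives $(a\rr c,\,a\land d)\in N(H,F)$, so $\to$ is well defined. I do not anticipate any real obstacle in the argument itself; the only point worth emphasising is that it genuinely leans on all three clauses of Definition \ref{defifiltro}, and not just on (IF1) and (IF2) — Example \ref{ejemplofiltronoifiltro} exhibits a lattice filter containing every dense element that nonetheless fails (IF3), and for such a filter the join condition above would break down, which is why the third, more technical clause cannot be dispensed with.
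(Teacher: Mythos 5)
Your argument is correct and coincides with the paper's own: the same chain of lattice identities together with (SH\ref{infimoeimplicaSH}) and (1) gives $(a\rr c)\land(a\land d)=0$, and the same instantiation of IF3) with $z:=c$, $t:=d$, $x:=a$ applied to (2) gives $(a\rr c)\lor(a\land d)\in F$, so $\to$ is well defined on $N(H,F)$. Your closing remark about Example \ref{ejemplofiltronoifiltro} showing that IF3) is indispensable is a nice observation, consistent with the paper's intent.
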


\begin{Lemma}\label{NHFessiminelsonalgebra}
If $\mathbf{H}\in\sH$ and $F$ is an $i$-filter of $\mathbf H$, then the system \linebreak $\langle N(H,F);$ $\sqcap,\sqcup,\to,\sim,\top \rangle$  with the operations defined as in  (V1)-(V5) is a semi-Nelson algebra.
\end{Lemma}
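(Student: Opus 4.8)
The plan is to verify directly that $\mathbf{N(H,F)}$ satisfies the eleven identities (SN1)--(SN11) defining a semi-Nelson algebra, leveraging as much as possible the fact that $\mathbf{V_k(H)}\in\sN$ already. First I would observe that $N(H,F)$ is a subset of $V_k(H)$ and that the operations $\sqcap,\sqcup,\sim,\top$ are simply the restrictions of the corresponding operations of $\mathbf{V_k(H)}$, since (V1),(V2),(V4),(V5) produce no new first-or-second coordinates beyond lattice operations on $H$; closure of $N(H,F)$ under these is routine (for $\sqcap$ and $\sqcup$ one checks the product of the first coordinates meets the join of the second coordinates at $0$ using (SN-style) distributivity in $\mathbf H$, and that the relevant join lands in $F$ using that $F$ is a lattice filter, IF1). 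The only operation requiring genuine care is $\to$, and Observation \ref{buenadefdeimplicavakarelov} has already done exactly that work: it shows $(a,b)\to(c,d)=(a\rr c, a\land d)\in N(H,F)$, using $c\land d=0$, (SH\ref{infimoeimplicaSH}), and crucially IF3) together with IF2) via $c\lor d\in F$. So $\mathbf{N(H,F)}$ is a well-defined subalgebra-like structure whose operations agree with those of $\mathbf{V_k(H)}$ on its underlying set.

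Once that is in place, the identities (SN1)--(SN11) for $\mathbf{N(H,F)}$ follow immediately from the fact that they hold in $\mathbf{V_k(H)}$ — which is a semi-Nelson algebra because $\mathbf H\in\sH$ (the generalization of Vakarelov's construction recalled in Section \ref{vakarelovconstruction}) — and that identities are inherited by substructures. More precisely, for each identity I would note that every term appearing in (SN1)--(SN11), including the derived operation $\ton$ (for which $(a,b)\ton(c,d)=(a\toh c, a\land d)$ by (V6)), is built from operations that restrict from $\mathbf{V_k(H)}$ to $N(H,F)$, so evaluating any such term at elements of $N(H,F)$ gives the same result whether computed in $\mathbf{N(H,F)}$ or in $\mathbf{V_k(H)}$; since the identity holds in the latter, it holds in the former. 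The constant $1$ of the semi-Nelson signature is $\top=(1,0)$, which lies in $N(H,F)$ because $1\land 0=0$ and $1\lor 0=1\in F$.

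The main obstacle, and the step deserving the most attention, is confirming closure under $\to$ in full — but this is precisely what Observation \ref{buenadefdeimplicavakarelov} establishes, so in the write-up I would either cite it directly or reproduce its short computation: the first coordinate condition $(a\rr c)\land(a\land d)=0$ uses (SH\ref{infimoeimplicaSH}) and $c\land d=0$, while membership $(a\rr c)\lor(a\land d)\in F$ is exactly the content of IF3) applied to $z=c$, $t=d$, $x=a$, given $c\lor d\in F$. It is worth emphasizing that this is the only place where conditions IF2) and IF3) are used (IF2 enters implicitly because Example \ref{ejemplofiltronoifiltro} shows IF3 can fail without the dense elements, but formally IF3 is what is invoked), which explains why an arbitrary lattice filter does not suffice. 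Everything else is either a restriction of a known structure or a routine closure check, so the proof is essentially a matter of assembling these pieces and invoking that $\mathbf{V_k(H)}\in\sN$.
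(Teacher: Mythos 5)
Your proposal is correct and takes essentially the same route as the paper: both reduce the lemma to well-definedness of the operations on $N(H,F)$ --- $\sqcap$, $\sqcup$, $\top$, $\sim$ from $F$ being a lattice filter and $\to$ from Observation \ref{buenadefdeimplicavakarelov} via IF3) --- and then obtain (SN1)--(SN11) from the semi-Heyting structure, the only cosmetic difference being that you inherit the identities by viewing $N(H,F)$ as a subalgebra of $\mathbf{V_k(H)}\in\sN$, while the paper says they are verified ``following the proof of Theorem 4.1'' of the semi-Nelson paper, which amounts to the same computations. One harmless side remark to fix: IF2) is not used in this lemma at all (it is needed later, in Lemma \ref{ifiltersthatcontaindenseelem}, to get $\pi_1(N(H,F))=H$), and Example \ref{ejemplofiltronoifiltro} exhibits a filter satisfying IF2) but not IF3), so your parenthetical reading of that example is slightly off, though it does not affect the argument.
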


\begin{proof}
Using that $F$ is a filter we can prove that the operations $\sqcap$, $\sqcup$, $\top$ and $\sim$ are well defined in $N(H,F)$ (see \cite{monteiro19construction}). Also, by observation \ref{buenadefdeimplicavakarelov}, it follows that the operation $\to$ is well defined. Furthermore, following the proof of Theorem 4.1 in\cite{cornejo16semiNelson},  $\mathbf{N(H,F)}$ verifies the axioms $SN1)$-$SN11)$ so it is is a semi-Nelson algebra.
\end{proof}

\begin{Lemma} \label{16012020}
Let $\mathbf{H}\in\sH$. If $\mathbf S$ is a subalgebra of $\mathbf{V_k}(\mathbf{H})$, such that $\pi_1(S)=H$ (with $\pi_1$ the projection over the first component of ${V_k}(H)$), then there exists an $i$-filter $E$ of $\mathbf H$ such that $S=N(H,E)$.
\end{Lemma}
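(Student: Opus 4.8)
The natural candidate for the filter is
$$E:=\{\,a\lor b : (a,b)\in S\,\},$$
that is, the set of all ``supports'' $a\lor b$ of pairs in $S$. I would first check the three defining conditions of an $i$-filter for $E$, and then prove the set equality $S=N(H,E)$.

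\emph{Step 1: $E$ is a lattice filter.} Upward closure: if $a\lor b\in E$ with $(a,b)\in S$ and $a\lor b\le u$, I need some $(a',b')\in S$ with $a'\lor b'=u$. The pair $(u,b)\sqcap\cdots$ will not immediately work since $(u,b)$ need not be in $V_k(H)$; instead I would use the element $\top=(1,0)$ and the join/meet operations of $\mathbf S$ to manufacture a pair with the prescribed support from $(a,b)$ and a pair whose support is (or dominates) $u$. A cleaner route: first show $E$ is closed under $\land$ using $(a,b)\sqcap(c,d)=(a\land c,b\lor d)$, whose support is $(a\land c)\lor(b\lor d)$, and relate this to $(a\lor b)\land(c\lor d)$ using distributivity together with the orthogonality $a\land b=c\land d=0$; then handle upward closure by combining with $\sqcup$. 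Nonemptiness follows from $\top\in S$, giving $1\in E$.

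\emph{Step 2: $\Ds(H)\subseteq E$.} By Lemma \ref{caractdensos}, a dense element has the form $y\lor y^*$. Since $y\land y^*=0$, the pair $(y,y^*)$ lies in $V_k(H)$; the hypothesis $\pi_1(S)=H$ gives some $(y,c)\in S$, and I would show that from $(y,c)$ together with $\top$ one can build a pair in $\mathbf S$ whose first coordinate is $y$ and whose second coordinate is exactly $y^*$ (for instance $\sim\bigl((\sim\top)\to \sim(y,c)\bigr)$ or a similar combination computing $(y, y\Rightarrow 0)=(y,y^*)$ from $(y,c)$). Hence $y\lor y^*\in E$.

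\emph{Step 3: condition IF3).} Given $z\lor t\in E$, pick $(z,s)\in S$ with $z\lor s$ the support; note $s$ need not equal $t$, so some care is needed, but the witness we really have at hand is a pair $(p,q)\in S$ with $p\lor q=z\lor t$, and applying $\pi_1(S)=H$ plus the algebra operations one can extract a genuine pair $(z, z^*\land(z\lor t)')$-type element. The key computation is that for $(z,z^\circ)\in S$ with appropriate $z^\circ$ and any $x\in H$, choosing a pair $(x,x^*)\in S$ (available since $x\in\pi_1(S)=H$... more precisely using the element of $S$ over $x$) and forming $(x,\cdot)\to(z,\cdot)=(x\Rightarrow z,\,x\land\cdot)$ via (V3) yields a pair in $S$ whose support is $(x\Rightarrow z)\lor(x\land t)$ after suitably tuning the second coordinates and intersecting with a pair of support $z\lor t$. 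So IF3) is exactly the closure of $S$ under $\to$ read off on supports.

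\emph{Step 4: $S=N(H,E)$.} The inclusion $S\subseteq N(H,E)$ is immediate: any $(a,b)\in S\subseteq V_k(H)$ satisfies $a\land b=0$, and $a\lor b\in E$ by definition of $E$. For the reverse inclusion, take $(a,b)\in N(H,E)$, so $a\land b=0$ and $a\lor b\in E$, i.e.\ there is $(c,d)\in S$ with $c\lor d=a\lor b$. Using $\pi_1(S)=H$ there is also a pair over $a$, say $(a,a')\in S$; then $(a,a')\sqcap$ (something of support $a\lor b$) should recover $(a,b)$ exactly, because once the first coordinate is pinned to $a$ and the support is pinned to $a\lor b$, orthogonality $a\land b=0$ forces the second coordinate to be $b$ (indeed $b=(a\lor b)\land b = (a\lor b)\setminus a$ in the sense that $b$ is the pseudocomplement-type complement of $a$ relative to $a\lor b$). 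Making this ``pinning'' precise is the crux.

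\textbf{Main obstacle.} The delicate point is Step 3 (establishing IF3)) together with the ``pinning'' argument in Step 4: in both I must move from the existence of \emph{some} pair in $S$ with a given first coordinate, or with a given support, to the existence of a pair with \emph{both} prescribed — and then identify its remaining coordinate uniquely. This uniqueness rests on the lattice-theoretic fact that in $V_k(H)$ a pair is determined by its first coordinate and its support (since $a\land b=0$ and $a\lor b$ known force $b$), so the real work is showing $\mathbf S$, being a subalgebra, is rich enough to realize every admissible (first coordinate, support) combination; this is where the analogue of the Heyting-case argument from \cite{viglizzo99algebras} has to be adapted, using (V1)--(V6) and the semi-Heyting identities (SH2)--(SH4) in place of the Heyting ones.
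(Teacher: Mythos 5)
The candidate filter you pick, $E=\{a\lor b:(a,b)\in S\}$, and your overall outline coincide with the paper's proof, but what you have written is a plan whose two decisive steps, IF3) and the inclusion $N(H,E)\subseteq S$, are left unresolved, and you acknowledge this yourself (``after suitably tuning the second coordinates'', ``making this pinning precise is the crux''). For IF3) the missing device is quite concrete: from a witness $(a,b)\in S$ with $a\lor b=z\lor t$ you get $(a,b)\sqcup\sim(a,b)=(a\lor b,0)\in S$, hence $\sim(a\lor b,0)=(0,z\lor t)\in S$; then, choosing $(x,x'),(z,z'),(t,t')\in S$ via $\pi_1(S)=H$, the element $\bigl((x,x')\to(z,z')\bigr)\sqcup\bigl((x,x')\sqcap(t,t')\bigr)$ lies in $S$, has first coordinate $(x\rr z)\lor(x\land t)$ and second coordinate reducible to $x\land z'\land t'$ (using $x\land x'=0$); finally \emph{joining} this with $(0,z\lor t)$ annihilates the second coordinate, since $x\land z'\land t'\land(z\lor t)=0$ by $z\land z'=t\land t'=0$, so $((x\rr z)\lor(x\land t),0)\in S$ and the support is exactly $(x\rr z)\lor(x\land t)\in E$. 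Your phrase ``intersecting with a pair of support $z\lor t$'' does not supply this mechanism: meeting with such a pair either destroys the first coordinate or fails to remove the unwanted second coordinate; the point is to join with the pair $(0,z\lor t)$ manufactured as above.

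In Step 4 the combination you propose, $(a,a')\sqcap(\text{pair of support } a\lor b)$, cannot pin the second coordinate: $(a,a')\sqcap(c,d)=(a\land c,a'\lor d)$ keeps the uncontrolled $a'$, and in particular $(a,a')\sqcap(a\lor b,0)=(a,a')$. The uniqueness of relative complements you invoke is correct, but it only says the answer is forced \emph{once} a suitable pair exists in $S$; it does not produce that pair. The construction that works goes through the second coordinate instead: take $(b,b')\in S$ (again $\pi_1(S)=H$), form $(b,b')\to(0,1)=(b^*,b)\in S$, and compute $(a\lor b,0)\sqcap(b^*,b)=((a\lor b)\land b^*,b)=(a,b)$, using $a\leq b^*$, which follows from $a\land b=0$ and pseudocomplementation. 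Two smaller points: your first formula in Step 2, $\sim\bigl((\sim\top)\to\sim(y,c)\bigr)$, evaluates to $(0,0\rr c)$, not $(y,y^*)$ (and in a semi-Heyting algebra $0\rr c$ need not be $1$); the correct term is $\sim\bigl((y,c)\to(0,1)\bigr)=(y,y^*)$, or, more directly for a dense $x$, $(x,y)\to(0,1)=(x^*,x)=(0,x)\in S$ already gives $x\in E$. And IF1), which you also leave unfinished, is handled by adapting the Heyting-case argument from the cited reference (upward closure again uses $\pi_1(S)=H$), so it is not the obstacle; the genuine gaps are IF3) and the reverse inclusion, which your proposal does not close.
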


\begin{proof}
Let $E=\{x\in H : x=a\lor b \text{ for some } (a,b)\in S\}$. We are going to prove that $E$ is an $i$-filter.

It was shown in \cite{viglizzo99algebras} that if $\mathbf H\in\He$, then $E$ is a filter of $\mathbf H$. But that proof can be extended naturally to the variety $\sH$, because it only uses lattice properties.

Consider a dense element $x\in H$. Since $\pi_1(S)=H$, then there exists $y\in H$ such that $(x,y)\in S$. Besides, since $S$ is a subalgebra of $\mathbf{V_k(H)}$, $(0,1)\in S$, and therefore $(x,y)\to(0,1)=(x\rr0,x\land1)=(x^*,x)=(0,x)\in S$. Hence, $x\in E$ because $x=x\lor0$, and we conclude that $E$ satisfies $IF2$).

Suppose now that for $z,t\in H$ we have $z\lor t\in E$,

Since $z\lor t\in E$, there exists a pair $(a,b)\in S$ such that $z\lor t=a\lor b$. But $S$ is a subalgebra of $\mathbf{V_k(H)}$, so $\sim(a,b)=(b,a)\in S$, and therefore $(a,b)\cup(b,a)=(a\lor b,a\land b)=(a\lor b,0)\in S$. This means that $(z\lor t,0)\in S$, and hence
\begin{equation}\label{19022020_1}
    \sim(z\lor t,0)=(0,z\lor t)\in S.
\end{equation}

Using the hypothesis $\pi_1(S)=H$, and $x,z,t\in H$, it follows that there exist some $x',z',t'\in H$ such that $(x,x'), (z,z'), (t,t')\in S$. This implies that the next term belongs to $S$:
$$((x,x')\to(z,z'))\cup((x,x')\cap(t,t'))=(x\Rightarrow z,x\land z')\cup(x\land t,x'\lor t')=$$ 
\begin{equation}\label{19022020_2}
    ((x\Rightarrow z)\lor(x\land t),(x\land z')\land(x'\lor t')).
\end{equation}

Since $(x\land z')\land(x'\lor t')=(x\land z'\land x')\lor(x\land z'\land t')=0\lor(x\land z'\land t')=x\land z'\land t'$, (\ref{19022020_2}) yields
\begin{equation}\label{19022020_3}
    ((x\Rightarrow z)\lor(x\land t),x\land z'\land t')\in S.
\end{equation}

Combining (\ref{19022020_1}) with (\ref{19022020_3}) we get
\begin{equation}\label{19022020_4}
    ((x\Rightarrow z)\lor(x\land t),x\land z'\land t')\cup(0,z\lor t)=((x\Rightarrow z)\lor(x\land t),x\land z'\land t'\land(z\lor t))\in S.
\end{equation}

Since $x\land z'\land t'\land(z\lor t)=(x\land z'\land t'\land z)\land(x\land z'\land t'\land t)=(x\land t'\land(z\land z'))\lor(x\land z\land(t\land t'))=0\lor0=0$, from (\ref{19022020_4}) it follows that
$$((x\Rightarrow z)\lor(x\land t),0)\in S.$$ 

By the definition of $E$ it follows that $$(x\Rightarrow z)\lor(x\land t)\lor0=(x\Rightarrow z)\lor(x\land t)\in E.$$

Hence, $E$ is an $i$-filter.

Let us prove now that $N(H,E)=S$. It is clear that $S\subseteq N(H,E)$. If we have $(a,b)\in N(H,E)$, then $a\land b=0$, and $a\lor b\in E$, so there exists $(c,d)\in S$ such that $c\lor d=a\lor b$. Then, as before $(c,d)\cup(d,c)=(c\lor d,0)=(a\lor b,0)\in S$. On the other hand, since $\pi_1(S)=H$, there exists some $b'\in H$ such that $(b,b')\in S$, and therefore $(b,b')\to(0,1)=(b^*,b)\in S$. 
By properties of the pseudocomplement, $a\land b=0$ implies that $a\leq b^*$ and hence $(a\lor b)\land b^*=(a\land b^*)\lor(b\land b^*)=a\lor 0=a$, so we can conclude that $(a\lor b,0)\cap(b^*,b)=((a\lor b)\land b^*,0\lor b)=(a,b)\in S$.
\end{proof}

\begin{Teo}
Every semi-Nelson algebra $\mathbf{A}$ is isomorphic to an algebra of the form $\mathbf{N(H,F)}$, for some semi-Heyting algebra $\mathbf{H}$ and some $i$-filter $F$ of $\mathbf{H}$.
\end{Teo}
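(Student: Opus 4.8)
The plan is to assemble the two representation results already established. Given $\mathbf{A}\in\sN$, set $\mathbf{H}=\mathbf{sH}(\mathbf{A})$, which is a semi-Heyting algebra by the facts recalled in Section~\ref{quotients}. By Theorem~\ref{isosemiHeytingsemiNelson}, the map $h(a)=(\class{a},\class{\sim a})$ is an isomorphism from $\mathbf{A}$ onto a subalgebra $\mathbf{S}$ of $\mathbf{V_k}(\mathbf{H})$. Thus it suffices to show that $\mathbf{S}$ has the form $\mathbf{N(H,F)}$ for some $i$-filter $F$ of $\mathbf{H}$, and then compose the isomorphisms.

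To invoke Lemma~\ref{16012020}, I first verify that $\pi_1(S)=H$. Every element of $H=A/_\equiv$ is of the form $\class{a}$ for some $a\in A$, and $\pi_1(h(a))=\class{a}$; hence the image of the first projection on $S$ is all of $H$. Lemma~\ref{16012020} then produces an $i$-filter $F$ of $\mathbf{H}$ with $S=N(H,F)$. Since the operations carried by $\mathbf{S}$ are precisely the restrictions to $S$ of the operations (V1)--(V5) of $\mathbf{V_k}(\mathbf{H})$ — and these are exactly the operations defining $\mathbf{N(H,F)}$, which are well defined on $N(H,F)$ by Observation~\ref{buenadefdeimplicavakarelov} — we conclude $\mathbf{S}=\mathbf{N(H,F)}$ as algebras, and this is a semi-Nelson algebra by Lemma~\ref{NHFessiminelsonalgebra}. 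Therefore $\mathbf{A}\cong\mathbf{S}=\mathbf{N(H,F)}$, as claimed.

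The proof is essentially a bookkeeping combination of Theorem~\ref{isosemiHeytingsemiNelson} and Lemma~\ref{16012020}, so I do not expect a genuine obstacle. The one point that warrants a moment's care is confirming the surjectivity hypothesis $\pi_1(S)=H$ needed to apply Lemma~\ref{16012020}, which as noted follows immediately from the surjectivity of the quotient map $a\mapsto\class{a}$; everything else is the routine observation that the subalgebra structure on $h(\mathbf{A})$ coincides with the structure of $\mathbf{N(H,F)}$.
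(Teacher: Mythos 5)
Your proposal is correct and follows essentially the same route as the paper: apply Theorem~\ref{isosemiHeytingsemiNelson} to embed $\mathbf{A}$ as a subalgebra $\mathbf{S}=h(\mathbf{A})$ of $\mathbf{V_k}(\mathbf{sH}(\mathbf{A}))$ and then invoke Lemma~\ref{16012020} to identify $\mathbf{S}$ with $\mathbf{N(H,E)}$ for an $i$-filter $E$. If anything, your check that $\pi_1(S)=H$ (via $\pi_1(h(a))=\class{a}$) is slightly more careful than the paper's remark that $\pi_1(\mathbf{V_k}(\mathbf{sH}(A)))=\mathbf{sH}(A)$, since the surjectivity of $\pi_1$ restricted to the subalgebra $S$ is what the lemma actually requires.
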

\begin{proof}
By Theorem \ref{isosemiHeytingsemiNelson}, every semi-Nelson algebra $\mathbf{A}$ can be obtained as a subalgebra of $\mathbf{V_k}(\mathbf{sH}(A))$, and clearly $\pi_1(\mathbf{V_k}(\mathbf{sH}(A)))=\mathbf{sH}(A)$, so $\mathbf{A}$ is isomorphic to $\mathbf N(\mathbf{sH}(\mathbf A),\mathbf E)$, where $E$ is the $i$-filter of Lemma \ref{16012020}.
\end{proof}

\medskip

Much in the same manner as for the case of Heyting algebras, for $\mathbf{H}\in\sH$, the $i$-filters $E$ such that $\pi_1(N(H,E))=H$ are exactly the ones that contain the dense elements of $H$:

\begin{Lemma} \label{ifiltersthatcontaindenseelem}
If $\mathbf{H}\in\sH$, and $F$ is an $i$-filter of $\mathbf H$, then $\pi_1(N(H,F))=H$.
\end{Lemma}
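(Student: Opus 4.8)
The plan is to prove the nontrivial inclusion $H\subseteq\pi_1(N(H,F))$, since $\pi_1(N(H,F))\subseteq H$ is immediate from the definition of $V_k(H)$. Concretely, given an arbitrary $a\in H$, I will exhibit a witness $b\in H$ with $(a,b)\in N(H,F)$, namely the pseudocomplement $b=a^*=a\rr 0$.

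The verification breaks into two points. First, $(a,a^*)\in V_k(H)$: by the defining property of the pseudocomplement in a semi-Heyting algebra (as recalled after Definition \ref{defpseudocomplementedlattices}), $a\land a^*=0$. Second, $a\lor a^*\in F$: by Lemma \ref{caractdensos}, every element of the form $y\lor y^*$ is dense, so in particular $a\lor a^*\in\Ds(H)$; then condition IF2) in the definition of $i$-filter gives $\Ds(H)\subseteq F$, whence $a\lor a^*\in F$. Together these two points show $(a,a^*)\in N(H,F)$, and since $\pi_1(a,a^*)=a$ we conclude $a\in\pi_1(N(H,F))$. As $a$ was arbitrary, $\pi_1(N(H,F))=H$.

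There is essentially no obstacle here: the argument is the exact analogue of the Heyting case discussed before Definition \ref{defifiltro}, and the only ingredient beyond elementary pseudocomplement arithmetic is precisely the density requirement IF2), which was built into the notion of $i$-filter for this very purpose (cf.\ Example \ref{ejemplofiltronoifiltro}, where a filter failing to contain a dense element — and also failing IF3) — would break the construction). So the write-up will be short, consisting of the choice of the witness $(a,a^*)$ and the two one-line checks above.
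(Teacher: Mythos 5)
Your proof is correct and follows exactly the paper's argument: take the witness $(a,a^*)$, use $a\land a^*=0$ and Lemma \ref{caractdensos} together with IF2) to get $a\lor a^*\in\Ds(H)\subseteq F$. Only your parenthetical about Example \ref{ejemplofiltronoifiltro} is slightly off — the filter $\{1\}$ there does contain all dense elements (it satisfies IF2)) and fails only IF3) — but this aside plays no role in the argument.
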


\begin{proof}
If $x\in H$, by Lemma \ref{caractdensos}, we have that $x\lor x^*\in\Ds(H)\subseteq F$. Also, $x\land x^*=0$ is valid, so $(x,x^*)\in N(H,F)$ and $x\in\pi_1(N(H,F))$.
\end{proof}

\subsection{Categorial equivalence}

We define now the category $\mathbf{sHF}$, which has as objects pairs $(\mathbf{H},F)$ (where $\mathbf{H}\in\sH$, and $F$ is an $i$-filter of $H$),  and as morphisms, functions $f:(\mathbf H,F)\longrightarrow (\mathbf H',F')$ such that $f:H\longrightarrow H'$ is a homomorphism of semi-Heyting algebras and $f(F)\subseteq F'$.

Consider now the category $\mathbf{sN}$ of semi-Nelson algebras and their morphisms; our objective now is to establish an equivalence between these categories. For this we define functors $\alpha:\mathbf{sHF}\longrightarrow\mathbf{sN}$ and $\beta:\mathbf{sN}\longrightarrow\mathbf{sHF}$.

\begin{Prop}\label{functorfromSHFtoSN}
The application $\alpha:\mathbf{sHF}\longrightarrow\mathbf{sN}$ defined on the objects of $\mathbf{sHF}$ by $\alpha((\mathbf H,F))=\mathbf{N(H,F)}$, and on morphisms by $\alpha(f)(a,b)=(f(a),f(b))$ for all $(a,b)\in N(H,F)$, is a functor from $\mathbf{sHF}$ to $\mathbf{sN}$.
\end{Prop}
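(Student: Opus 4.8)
The plan is to verify the two functorial requirements: that $\alpha$ sends objects to objects and morphisms to morphisms, and that it respects identities and composition. The object part is already done: by Lemma \ref{NHFessiminelsonalgebra}, $\mathbf{N(H,F)}$ is a semi-Nelson algebra whenever $\mathbf{H}\in\sH$ and $F$ is an $i$-filter, so $\alpha((\mathbf H,F))\in\mathbf{sN}$. The bulk of the proof concerns morphisms.

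First I would check that $\alpha(f)$ is well defined as a function $N(H,F)\to N(H',F')$. Given $(a,b)\in N(H,F)$, so $a\land b=0$ and $a\lor b\in F$, I need $(f(a),f(b))\in N(H',F')$. Since $f$ is a semi-Heyting homomorphism, $f(a)\land f(b)=f(a\land b)=f(0)=0$ and $f(a)\lor f(b)=f(a\lor b)\in f(F)\subseteq F'$ by the morphism condition in $\mathbf{sHF}$. So $(f(a),f(b))\in N(H',F')$.

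Next I would show $\alpha(f)$ is a homomorphism of semi-Nelson algebras, i.e.\ that it commutes with $\sqcap,\sqcup,\to,\sim$ and preserves $\top$. Each of these is a routine computation using that $f$ preserves $\land,\lor,\rr,0,1$; for instance
\[
\alpha(f)((a,b)\to(c,d))=\alpha(f)(a\rr c,\,a\land d)=(f(a\rr c),\,f(a\land d))=(f(a)\rr f(c),\,f(a)\land f(d))=(f(a),f(b))\to(f(c),f(d)),
\]
and similarly for the other operations; $\alpha(f)(\top)=\alpha(f)(1,0)=(f(1),f(0))=(1,0)=\top$. Finally, functoriality: $\alpha(\mathrm{id}_{(\mathbf H,F)})(a,b)=(\mathrm{id}(a),\mathrm{id}(b))=(a,b)$, so $\alpha(\mathrm{id})=\mathrm{id}$; and for composable $f,g$, $\alpha(g\circ f)(a,b)=((g\circ f)(a),(g\circ f)(b))=\alpha(g)(f(a),f(b))=\alpha(g)(\alpha(f)(a,b))$, so $\alpha(g\circ f)=\alpha(g)\circ\alpha(f)$.

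There is no real obstacle here — every step reduces to the fact that $f$ is a semi-Heyting homomorphism together with the defining condition $f(F)\subseteq F'$. The only point demanding a little care is well-definedness of $\alpha(f)$ on objects, where the condition $f(F)\subseteq F'$ is exactly what guarantees $f(a)\lor f(b)$ lands in $F'$; everything else is bookkeeping.
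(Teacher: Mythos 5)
Your proof is correct and follows essentially the same route as the paper's: object part via Lemma \ref{NHFessiminelsonalgebra}, componentwise verification that $\alpha(f)$ is a semi-Nelson homomorphism, and preservation of identities and composition. In fact you are slightly more careful than the paper, which leaves implicit the check that $(f(a),f(b))\in N(H',F')$ — precisely the step where the condition $f(F)\subseteq F'$ is used — so no changes are needed.
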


\begin{proof}
Let us check that $\alpha$ is well defined. If $(\mathbf H,F)$ is an object of $\mathbf{sHF}$, then $\alpha((\mathbf H,F))=\mathbf{N(H,F)}$ is a semi-Nelson algebra due to Lemma \ref{NHFessiminelsonalgebra}.

Now we take a morphism $f:(\mathbf H,F)\longrightarrow(\mathbf H',F')$. Since $f$ is a Heyting homomorphism, we have by straightforward calculations that:
$\alpha(f)((a,b)\to(c,d))=(f(a\rr c),f(a\land d))=(f(a),f(b))\to((f(c),f(d)))=$
$\alpha(f)(a,b)\to\alpha(f)(c,d)$, and similarly for the rest of the operations. 

For all $(a,b)$ $\in$ $N(H,F)$,  $(a,b)=(Id_{(H,F)}(a),Id_{(H,F)}(b)))$, therefore $\alpha(Id_{(\mathbf H,F)})=Id_{\alpha(\mathbf H,F)}$.

Finally, for any two given morphisms $f:(\mathbf H',F')\longrightarrow(\mathbf H'',F'')$, and $g:(\mathbf H,F)\longrightarrow(\mathbf H',F')$, we have that  for all $(a,b)\in N(H,F)$, $\alpha(f\circ g)(a,b)=((f\circ g)(a),(f\circ g)(b))=(f(g(a)),f(g(b)))=\alpha(f)((g(a),g(b)))=((\alpha(f))\circ(\alpha(g)))(a,b)$.
\end{proof}

\begin{Def}
Let $A\in\sN$. We say that an element $a\in A$ is \emph{positive} if $ \sim a\leq a$. We denote by $A^+$ the set of positive elements of $A$.
\end{Def}

\begin{Obs}\label{formadelospositivos}
If $A\in\sN$, then $A^+=\{x\lor\sim x : x\in A\}$. Indeed, if $x\in A^+$, then $\sim x\leq x$, so $x=x\lor\sim x$. On the other hand, if $y=x\lor\sim x$, by $SN$\ref{identidad_ditrib_neg}) and $SN$\ref{identidad_kleene}) it follows that $y=x\lor\sim x\geq x\land\sim x=\sim(\sim x\lor x)=\sim y$.
\end{Obs}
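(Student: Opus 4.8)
The plan is to prove the two inclusions separately, using only the defining equations of semi-Nelson algebras and the order-theoretic facts that come from (SN1)–(SN2). First I would pin down what $a\le b$ means in a semi-Nelson algebra: since $\langle A;\land,\lor,1\rangle$ forms a lattice by (SN1)–(SN2) (these are Kleene-lattice axioms), $a\le b$ abbreviates $a\land b=a$, equivalently $a\lor b=b$. This is the only lattice fact I need.

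For the inclusion $A^+\subseteq\{x\lor\sim x:x\in A\}$, take $a\in A^+$, so $\sim a\le a$. Then $a\lor\sim a=a$ by the order characterization, which exhibits $a$ in the required form with $x:=a$. This direction is immediate.

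For the reverse inclusion, let $y=x\lor\sim x$ for some $x\in A$; I must show $\sim y\le y$, i.e. $\sim y\le x\lor\sim x$. Compute $\sim y=\sim(x\lor\sim x)$. Applying $\sim$ to both sides of the De Morgan law $SN4)$ after using involution $SN3)$ — more directly, from $SN4)$ with the roles of the arguments set up so that $\sim(x\land z)=\sim x\lor\sim z$, together with $SN3)$, one gets $\sim(u\lor v)=\sim u\land\sim v$; apply this with $u=x$, $v=\sim x$ to obtain $\sim y=\sim x\land x$. So it remains to see $x\land\sim x\le x\lor\sim x$, which holds in any lattice. Actually the paper's hint routes this through the Kleene axiom $SN5)$, $x\land\sim x=(x\land\sim x)\lor(y\lor\sim y)$; I would follow that route if I want to avoid invoking absorption-based lattice inequalities directly, since $SN5)$ immediately gives $x\land\sim x\ge y\lor\sim y\ge\sim y$, hence $x\land\sim x=\sim y$ combined with the previous computation yields $\sim y\le\sim y\lor y = y$ — wait, more carefully: from $SN5)$ we read off that $x\land\sim x$ dominates $y\lor\sim y$, and since $\sim y=x\land\sim x$ we get $\sim y\ge y\lor\sim y\ge y$, which is even stronger than needed and in particular gives $\sim y\le y$ only if also $\sim y\le y$; so the cleaner statement is: $\sim y = x\land\sim x$ and then $\sim y\le x\lor\sim x=y$ by the trivial lattice inequality $x\land\sim x\le x\lor\sim x$.

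The only mild obstacle is bookkeeping: making sure the De Morgan identity $\sim(u\lor v)=\sim u\land\sim v$ is correctly derived from the stated axioms $SN3)$ and $SN4)$ (which are phrased for $\land$, not $\lor$), and being careful about the variable names clashing between the generic $x$ of the axiom schema and the $x,y$ fixed in the statement. Everything else is a one-line substitution, so no step is genuinely hard.
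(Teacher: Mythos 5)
Your proposal is correct and takes essentially the same route as the paper: one inclusion is immediate from $\sim a\le a$, and the other rewrites $\sim(x\lor\sim x)=x\land\sim x$ via the De Morgan law obtained from (SN3)--(SN4) and then uses $x\land\sim x\le x\lor\sim x$. The only difference is cosmetic: you justify that last inequality as a trivial lattice fact (after a tangled detour through (SN5) that you rightly discard), while the paper cites (SN5) for it.
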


We write $\class{A^+}$ for the set $\{\class{a}:a\in A^+\}$.

\begin{Lemma} \label{lospositivossonifiltros}
If $\mathbf{A}\in\sN$, then $\class{A^+}$ is an $i$-filter of $\mathbf{sH}(A)$.
\end{Lemma}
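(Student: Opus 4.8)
The plan is to verify the three defining conditions of an $i$-filter (IF1, IF2, IF3) for the set $\class{A^+} \subseteq sH(A)$, working throughout with the translation dictionary supplied by the earlier results: the congruence $\equiv$ on $\mathbf{A}$ is described by $\ton$ as well as by $\to$ (Proposition \ref{propertiesseminelson}(1)), the positive elements are exactly $\{x \lor \sim x : x \in A\}$ (Observation \ref{formadelospositivos}), and the operations on $sH(A)$ are computed representative-wise. I expect that verifying IF3 -- closure under the operation $(z,t) \mapsto (x \rr z) \lor (x \land t)$ -- will be the main obstacle, since it is the genuinely new and non-lattice-theoretic condition; IF1 and IF2 should be comparatively routine.

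\textbf{IF1 (lattice filter).} First I would check $\class{A^+}$ is upward closed and closed under $\cap$. For upward closure: if $\class{a} \in \class{A^+}$ with $a$ positive and $\class{a} \leq \class{b}$ in $sH(A)$, I need $\class{b} \in \class{A^+}$, i.e. some positive element in the class of $b$. The natural candidate is $b \lor \sim b$, which is always positive by Observation \ref{formadelospositivos}; so it suffices to show $b \equiv b \lor \sim b$ whenever $\class{a} \leq \class{b}$ for some positive $a$. The order $\class{a} \leq \class{b}$ unpacks (via $\cap$) to $a \land b \equiv a$, equivalently $a \to b$ and $b \to (a\land b)$... more simply, $\class a \le \class b$ means $a \land b \equiv a$. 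Using $a$ positive, i.e. $\sim a \le a$, one pushes this through the Nelson-algebra identities (SN3)--(SN5) relating $\sim$ to $\land,\lor$) to derive $\sim b \land b \equiv \sim b \land a \le$ something forcing $\sim b \le b$ up to $\equiv$, hence $b \equiv b \lor \sim b$. Closure under $\cap$: if $a,b$ are positive then $\sim(a\land b) = \sim a \lor \sim b \le a \lor b$; to get $\sim(a\land b) \le a \land b$ I would instead observe $a \land b$ and $(a\land b) \lor \sim(a\land b)$ lie in the same $\equiv$-class when both $a,b$ are positive, again via the De Morgan and Kleene identities. Finally $\class 1 = \top \in \class{A^+}$ since $\sim 1 \le 1$ (as $\class{\sim 1} = \bot$ is the bottom of $sH(A)$, giving $1$ positive, or directly from (SN5)).

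\textbf{IF2 (dense elements).} By Lemma \ref{caractdensos}, a dense element of $sH(A)$ has the form $\class{y} \cup \class{y}^* = \class{y \lor y^*}$ for some $y \in A$, where the pseudocomplement is $\class{y}^* = \class{y} \rr \bot = \class{y \to \sim 1}$. So a typical dense element is $\class{y \lor (y \to \sim 1)}$. I must exhibit a positive element equivalent to this. The slick route: show directly that $y \lor (y \to \sim 1)$ is itself positive, or that it is $\equiv$-equivalent to its own "positivization" $w \lor \sim w$ with $w = y \lor (y\to\sim 1)$; for the latter it again suffices to show $\sim w \le w$ up to $\equiv$, which should follow because $\sim(y \to \sim 1) \le$ (by SN10, $(\sim(x\to y)) \ton (x \land \sim y) = 1$, with $y := \sim 1$) something dominated by $y$, while $\sim y \le$ something dominated by $y \to \sim 1$. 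I would assemble these two dominations to get $\sim w = \sim y \land \sim(y \to \sim 1) \le w$ modulo $\equiv$.

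\textbf{IF3.} Suppose $\class{z} \cup \class{t} = \class{z \lor t} \in \class{A^+}$, so $z \lor t \equiv s \lor \sim s$ for some $s$; I must show $(\class{x} \rr \class{z}) \cup (\class{x} \cap \class{t}) = \class{(x \to z) \lor (x \land t)} \in \class{A^+}$ for every $x \in A$. The strategy is to produce an explicit element of $A^+$ in this class. A promising candidate comes from Vakarelov's construction intuition: in $\mathbf{V_k}$ the positive elements correspond to pairs whose two coordinates join to $1$, and the operation in IF3 is exactly the first coordinate of $(x,\cdot)\to(z,\cdot) \sqcup (x,\cdot)\sqcap(t,\cdot)$; so I would guess the witness is obtained by symmetrizing, e.g. $\bigl((x\to z)\lor(x\land t)\bigr) \lor \sim\bigl((x\to z)\lor(x\land t)\bigr)$, and then prove this is $\equiv$-equivalent to $(x\to z)\lor(x\land t)$ using that $z\lor t$ is positive. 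Concretely, $\sim\bigl((x\to z)\lor(x\land t)\bigr) = \sim(x\to z) \land (\sim x \lor \sim t)$; by SN10 $\sim(x\to z) \le x \land \sim z$ (up to $\equiv$), so this is $\le (x\land \sim z)\land(\sim x \lor \sim t) = x \land \sim z \land \sim t$ (the $x\land \sim z\land \sim x$ term being $\equiv$-below the Kleene obstruction, hence absorbable), and $\sim z \land \sim t = \sim(z\lor t) \equiv \sim(s\lor\sim s) = s \land \sim s$, which is a "small" element; one then checks $x \land (s\land\sim s)$ is $\equiv$-below $(x\to z)\lor(x\land t)$, completing $\sim(\cdot) \le (\cdot)$ modulo $\equiv$ and hence that $(x\to z)\lor(x\land t)$ is positive up to $\equiv$. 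I expect the bookkeeping here -- keeping careful track of which inequalities hold on the nose versus only after passing to $sH(A)$, and correctly invoking (SN5)/(SN10)/(SN11) and Proposition \ref{propertiesseminelson} -- to be the delicate part, but no single step should be deep.
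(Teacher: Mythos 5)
Your overall plan (direct verification of IF1--IF3 via Nelson identities) is a legitimately different route from the paper, but as written it contains a genuine gap: you repeatedly treat $\equiv$ as if it were compatible with $\sim$, whereas $\equiv$ is only a congruence for $\land,\lor,\to$ (this is precisely why $\class{A^+}$ carries information not recoverable from $\mathbf{sH}(A)$ alone). Concretely, in IF1 you reduce upward closure to ``$b\equiv b\lor\sim b$ whenever $\class{a}\le\class{b}$ for some positive $a$'', and in IF3 you aim to show that $v=(x\to z)\lor(x\land t)$ itself satisfies $\class{\sim v}\le\class{v}$, passing through the step $\sim(z\lor t)\equiv\sim(s\lor\sim s)$. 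Both claims are false. In the paper's Example \ref{EJC1} the center $c$ satisfies $c\equiv 0$, so $\class{0}\in\class{A^+}$; taking $a=c$, $b=0$ gives $b\lor\sim b=1\not\equiv 0=b$, so your IF1 reduction cannot be completed. For IF3 take $z=t=0$, $x=1$: then $\class{z\lor t}=\class{0}\in\class{A^+}$, but $v=(1\to 0)\lor(1\land 0)=0$ and $\class{\sim v}=\class{1}\not\le\class{0}=\class{v}$, so $v\lor\sim v\not\equiv v$ even though $\class{v}\in\class{A^+}$ (witnessed by $c$, not by $v\lor\sim v$). The offending inference ``$z\lor t\equiv s\lor\sim s$ implies $\sim(z\lor t)\equiv\sim(s\lor\sim s)$'' is exactly an application of $\sim$-compatibility, which fails here: $0\equiv c$ but $\sim 0=1\not\equiv c=\sim c$. (Your IF2 sketch, by contrast, is workable: for $w=y\lor(y\to\sim 1)$ one really does get $\class{\sim w}\le\class{w}$ from SN10/SN11, so $\class{w\lor\sim w}=\class{w}$; and closure of $A^+$ under $\land$ holds on the nose by the Kleene identity.)

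The missing idea is that the positive witness must be built from the \emph{given} positive element, not by positivizing the representative at hand: for upward closure use $(a\lor b)\lor\sim(a\lor b)$, whose class is $\class{b}$ because $\class{\sim a}\le\class{a}\le\class{b}$; for IF3 use $w=v\lor(s\land\sim s)$ where $s\lor\sim s\equiv z\lor t$, for which $\class{\sim w}=\bot$ (using $\class{u\land\sim u}=\bot$ and SN10/SN11), so $w\lor\sim w$ is a positive element in the class of $v$. These manipulations are exactly what the paper packages once and for all: its proof of this lemma is much shorter, embedding $\mathbf{A}$ into $\mathbf{V_k}(\mathbf{sH}(A))$ via $h(a)=(\class{a},\class{\sim a})$ (Theorem \ref{isosemiHeytingsemiNelson}), noting $\pi_1(h(A))=\mathbf{sH}(A)$, invoking Lemma \ref{16012020}, and identifying the resulting $i$-filter with $\class{A^+}$ by Observation \ref{formadelospositivos}. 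A direct verification along your lines is possible, but only after repairing the IF1 and IF3 witnesses as above.
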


\begin{proof}
By Theorem \ref{isosemiHeytingsemiNelson}, there exists a subalgebra $\mathbf S$ of $\mathbf{V_k}(\mathbf{sH}(A))$ such that $\mathbf A$ is isomorphic to $\mathbf S$, and the isomorphism is given by $h(a)=(\class{a},\class{\sim a})$ for all $a\in A$. Therefore $S=\{(\class{a},\class{\sim a}) : a\in A\}$. Then, it is immediate that $\pi_1(S)=\mathbf{sH}(A)$.

By Lemma \ref{16012020}, $S=N(\mathbf{sH}(A),E)$, where $E=\{\class{a}\lor\class{\sim a} : a\in A\}$ is an $i$-filter of $\mathbf{sH}(\mathbf A)$. But $E=\class{A^+}$, due to observation \ref{formadelospositivos}, which completes the proof.
\end{proof}

\begin{Prop} \label{functorfromSNtoSHF}
The application $\beta:\mathbf{sN}\longrightarrow\mathbf{sHF}$ defined on the objects of $\mathbf{sN}$ by $\beta(\mathbf{A})=(\mathbf{sH}(\mathbf A),\class{A^+})$, and on morphisms  $f:A\longrightarrow A'$ by $\beta(f)(\class{a})=\class{f(a)}$ for all $a\in A$,  is a  functor from $\mathbf{sN}$ to $\mathbf{sHF}$.
\end{Prop}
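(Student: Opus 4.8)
The plan is to verify four things in turn: that $\beta$ sends objects of $\mathbf{sN}$ to objects of $\mathbf{sHF}$, that $\beta(f)$ is well defined as a function on equivalence classes, that $\beta(f)$ is in fact a morphism of $\mathbf{sHF}$, and that $\beta$ preserves identities and composition. For the first point, given $\mathbf{A}\in\sN$ the algebra $\mathbf{sH}(\mathbf A)$ lies in $\sH$ by the quotient construction recalled in Section \ref{quotients}, and $\class{A^+}$ is an $i$-filter of $\mathbf{sH}(\mathbf A)$ by Lemma \ref{lospositivossonifiltros}; hence $(\mathbf{sH}(\mathbf A),\class{A^+})$ is an object of $\mathbf{sHF}$.

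For a morphism $f:\mathbf A\to\mathbf A'$ of $\sN$, I would first show $\beta(f)$ is independent of the chosen representative: if $\class{a}=\class{b}$ in $\mathbf{sH}(\mathbf A)$ then $a\to b=1=b\to a$ by definition of $\equiv$, and since $f$ preserves $\to$ and $1$ we get $f(a)\to f(b)=f(a\to b)=f(1)=1$ and symmetrically $f(b)\to f(a)=1$, so $f(a)\equiv f(b)$, i.e. $\class{f(a)}=\class{f(b)}$. Thus $\beta(f)\colon\mathbf{sH}(\mathbf A)\to\mathbf{sH}(\mathbf A')$ is a well-defined map.

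Next I would check $\beta(f)$ is a semi-Heyting homomorphism carrying $\class{A^+}$ into $\class{{A'}^+}$. Since $\cap,\cup,\rr$ and the constants $\bot=\class{\sim1}$, $\top=\class{1}$ of $\mathbf{sH}(\mathbf A)$ are computed on representatives and $f$ preserves $\land,\lor,\to,\sim,1$, each required identity is a one-line computation, e.g. $\beta(f)(\class{x}\rr\class{y})=\class{f(x\to y)}=\class{f(x)\to f(y)}=\beta(f)(\class{x})\rr\beta(f)(\class{y})$ and $\beta(f)(\bot)=\class{f(\sim1)}=\class{\sim f(1)}=\class{\sim1}=\bot$, and similarly for $\cap$, $\cup$ and $\top$. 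For the filter condition, by Observation \ref{formadelospositivos} an element $a\in A^+$ satisfies $\sim a\le a$; applying the lattice homomorphism $f$ gives $\sim f(a)=f(\sim a)\le f(a)$, so $f(a)\in {A'}^+$ and $\beta(f)(\class{a})=\class{f(a)}\in\class{{A'}^+}$. Hence $\beta(f)(\class{A^+})\subseteq\class{{A'}^+}$ and $\beta(f)$ is a morphism of $\mathbf{sHF}$.

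Finally, functoriality is immediate: $\beta(Id_{\mathbf A})(\class{a})=\class{a}$ for all $a\in A$, so $\beta(Id_{\mathbf A})=Id_{\beta(\mathbf A)}$, and for composable $g\colon\mathbf A\to\mathbf A'$, $f\colon\mathbf A'\to\mathbf A''$ we have $\beta(f\circ g)(\class{a})=\class{f(g(a))}=\beta(f)(\class{g(a)})=(\beta(f)\circ\beta(g))(\class{a})$. The only step that is not purely mechanical is the well-definedness on representatives, but even that reduces at once to $f$ commuting with $\to$ and fixing $1$, so I do not expect a genuine obstacle here; the bulk of the argument is routine checking on representatives.
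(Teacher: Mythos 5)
Your proof is correct and takes essentially the same route as the paper: the object part is handled via Lemma \ref{lospositivossonifiltros}, the semi-Heyting homomorphism identities are checked on representatives, and the filter condition $\beta(f)(\class{A^+})\subseteq\class{A'^+}$ is obtained from positivity being preserved by $f$. The only (harmless) differences are that you make explicit the well-definedness of $\beta(f)$ on equivalence classes, which the paper leaves implicit, and that you use the defining condition $\sim a\leq a$ of positive elements directly rather than the decomposition $a=x\lor\sim x$ of Observation \ref{formadelospositivos} (note that $\sim a\leq a$ is the definition of positive, so your citation of that observation is slightly off, but the argument is fine).
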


\begin{proof}
Let us check that $\beta$ is well defined. For an object $\mathbf{A}$ of $\mathbf{sN}$, $\mathbf{sH}(\mathbf{A})$ is a semi-Heyting algebra, and by Lemma \ref{lospositivossonifiltros} we have that $\class{A^+}$ is an $i$-filter of $\mathbf{sH}(\mathbf{A})$.
Therefore, $(\mathbf{sH}(A),\class{A^+})$ is an object in $\mathbf{sHF}$.

For any semi-Nelson morphism $f:A\longrightarrow A'$, we calculate:
$ \beta(f)(\class{a\to b})=$
$\class{f(a\to b)}=$ $\class{f(a)\to f(b)}=$ $\class{f(a)}\rr\class{f(b)}=$ 
$\beta(f)(\class{a})\rr\beta(f)(\class{b})$,
 and similarly for the other operations.

Also, if $\class{x}\in \class{A^+}$, then by observation \ref{formadelospositivos} we can write $\class{x}=$ $\class{y\lor$ $\sim y}$, for some $y\in A$, and so $\beta(f)(\class{x})=\beta(f)(\class{y\lor\sim y})=\beta(f)(\class{y})\cup\beta(f)(\class{\sim y})=\class{f(y)}\cup\class{f(\sim y)}=$ $\class{f(y)}\cup\class{\sim f(y)}=$ $\class{f(y)\lor\sim f(y)}$. Since $f(y)\in f(A)\subseteq A'$, it follows that $\beta(f)(\class{x})\in \class{A'^+}$. This shows that $\beta(f)(\class{A^+})\subseteq \class{A'^+}$, so $\beta(f)$ is a morphism in $\mathbf{sHF}$.

It is straightforward to check that $\beta$ preserves identities and compositions, so it is indeed a functor.
\end{proof}

The connection between theorems \ref{functorfromSHFtoSN} and \ref{functorfromSNtoSHF} is the following:

\begin{Teo}
The functors $\alpha$ and $\beta$ establish an equivalence between the categories $\mathbf{sHF}$ and $\mathbf{sN}$.
\end{Teo}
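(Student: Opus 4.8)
The plan is to exhibit natural isomorphisms $\eta:\mathrm{Id}_{\mathbf{sN}}\Rightarrow\alpha\circ\beta$ and $\varepsilon:\mathrm{Id}_{\mathbf{sHF}}\Rightarrow\beta\circ\alpha$, rather than merely checking essential surjectivity and full faithfulness, since the representation theorems in the excerpt already give the candidate isomorphisms componentwise. First I would treat $\alpha\circ\beta$: for $\mathbf A\in\sN$ we have $(\alpha\circ\beta)(\mathbf A)=\mathbf{N}(\mathbf{sH}(\mathbf A),\class{A^+})$, and by Lemma \ref{lospositivossonifiltros} together with the proof of Lemma \ref{16012020} this is exactly the subalgebra $S=\{(\class a,\class{\sim a}):a\in A\}$ of $\mathbf{V_k}(\mathbf{sH}(\mathbf A))$; so the map $h_{\mathbf A}(a)=(\class a,\class{\sim a})$ from Theorem \ref{isosemiHeytingsemiNelson} is an isomorphism $\mathbf A\to(\alpha\circ\beta)(\mathbf A)$. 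I would then verify naturality: for a semi-Nelson morphism $f:\mathbf A\to\mathbf A'$ one must show $h_{\mathbf A'}\circ f=(\alpha\circ\beta)(f)\circ h_{\mathbf A}$, which unwinds to the identity $(\class{f(a)},\class{\sim f(a)})=(\beta(f)(\class a),\beta(f)(\class{\sim a}))$, true by definition of $\beta(f)$ and the fact that $f$ commutes with $\sim$.

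Next I would treat $\beta\circ\alpha$. For an object $(\mathbf H,F)$ of $\mathbf{sHF}$ we have $(\beta\circ\alpha)(\mathbf H,F)=(\mathbf{sH}(\mathbf{N}(\mathbf H,F)),\class{N(H,F)^+})$. The first component is isomorphic to $\mathbf H$ via the map $g_{\mathbf H}(a)=\class{(a,a^*)}$ of Theorem \ref{isosemiHeytingsemiNelson} (applied with $\mathbf{V_k}$ replaced by $\mathbf{N}(\mathbf H,F)$, which is legitimate by Lemma \ref{ifiltersthatcontaindenseelem} since $\pi_1(N(H,F))=H$, so the argument of that theorem goes through verbatim using only pairs with first coordinate ranging over all of $H$). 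It then remains to check that this isomorphism carries $F$ onto $\class{N(H,F)^+}$, i.e.\ $g_{\mathbf H}(F)=\class{N(H,F)^+}$: by Observation \ref{formadelospositivos} the positive elements of $\mathbf{N}(\mathbf H,F)$ are the $(x,y)\sqcup\sim(x,y)=(x\lor y,0)$ with $(x,y)\in N(H,F)$, hence exactly the pairs $(z,0)$ with $z\in F$ (using $IF2)$ and Lemma \ref{caractdensos} for ``$\supseteq$''), and $g_{\mathbf H}(z)=\class{(z,z^*)}=\class{(z,0)}$ precisely when $z$ is dense or more generally when $(z,z^*)\equiv(z,0)$ — so I would instead argue directly that $\class{N(H,F)^+}$ equals $\{\class{(z,0)}:z\in F\}$ and that $g_{\mathbf H}$ maps $F$ onto this set, checking injectivity of $g_{\mathbf H}$ restricted to $F$ via the definition of $\equiv$ in $\mathbf{N}(\mathbf H,F)$. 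Naturality of $\varepsilon$ with respect to a morphism $f:(\mathbf H,F)\to(\mathbf H',F')$ reduces again to a routine diagram chase using $\alpha(f)(a,b)=(f(a),f(b))$ and the definition of $\beta$ on morphisms.

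I expect the main obstacle to be the $\beta\circ\alpha$ side, and specifically the identification of the filter component: one has to be careful that $g_{\mathbf H}(z)=\class{(z,z^*)}$ while the positive elements are represented by $\class{(z,0)}$, so the argument hinges on showing $(z,z^*)\equiv(z,0)$ in $\mathbf{N}(\mathbf H,F)$ for $z\in F$ — equivalently $(z,z^*)\to(z,0)=\top=(z,0)\to(z,z^*)$ — and this is where condition $IF3)$ and the density hypothesis $IF2)$ genuinely get used; the Heyting-case proof in \cite{viglizzo99algebras,monteiro19construction} must be inspected to confirm it survives the weakening to $\sH$, relying on the fact that $\mathbf{sH}(\mathbf A)$ is built from the term $\toh$ and Proposition \ref{propertiesseminelson}. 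Once that equality is in hand, everything else is bookkeeping, and the two naturality verifications are mechanical. I would therefore structure the write-up as: (i) define $\eta_{\mathbf A}=h_{\mathbf A}$ and prove it is a natural isomorphism; (ii) define $\varepsilon_{(\mathbf H,F)}=g_{\mathbf H}$, prove it is a semi-Heyting isomorphism onto the first component and that it sends $F$ bijectively to $\class{N(H,F)^+}$; (iii) check naturality of $\varepsilon$; (iv) conclude by the standard characterization of an equivalence of categories via a pair of adjoint natural isomorphisms.
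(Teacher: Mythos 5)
Your proposal is correct and follows essentially the same route as the paper: the equivalence is witnessed by the two natural isomorphisms $x\mapsto\class{(x,x^*)}$ on the $\mathbf{sHF}$ side and $a\mapsto(\class{a},\class{\sim a})$ on the $\mathbf{sN}$ side, with the component checks either recomputed (as the paper does) or, as you do, imported from Theorem \ref{isosemiHeytingsemiNelson} and Lemmas \ref{16012020}, \ref{lospositivossonifiltros} and \ref{ifiltersthatcontaindenseelem}; your insistence that $g_{\mathbf H}$ carry $F$ \emph{onto} $\class{N(H,F)^+}$, so that the inverse is again an $\mathbf{sHF}$-morphism, is a point the paper leaves implicit. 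The only small inaccuracy is where you expect the hypotheses to bite: $(z,z^*)\equiv(z,0)$ is immediate from $(z,0)\to(z,z^*)=(1,0)=(z,z^*)\to(z,0)$ and needs neither $IF2)$ nor $IF3)$, which are instead used, respectively, for well-definedness of $x\mapsto(x,x^*)$ (via Lemma \ref{caractdensos}) and for well-definedness of $\to$ on $N(H,F)$ (Observation \ref{buenadefdeimplicavakarelov}).
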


\begin{proof}

Notice that $\beta\alpha(\ba,F)=\beta(\mathbf{N(\ba,F)})=(\mathbf{sH(N(\ba,F))},\class{N(A,F)^+})$, with $\mathbf{sH(N(\ba,F))} =\{\class{(x,y)} : (x,y)\in N(A,F)\}=
\{\class{(x,y)} : x,y\in A, x\land y=0, x\lor y\in F \}$.

We define now $\eta_A(x)=\class{(x,x^*)}$, for each object $\mathbf{A}$ of $\mathbf{sHF}$ and for all $x\in A$, and we prove that $\eta=\{\eta_A\}$ is a natural isomorphism from $id_\mathbf{sHF}$ to $\beta\alpha$.

$\eta_A$ is well defined: Since $x\land x^*=0$ and also $\Ds(A)\subseteq F$ it follows that $x\lor x^*\in \Ds(A)\subseteq F$. Hence, $(x,x^*)\in N(A,F)$.

$\eta_A$ is surjective:    
    If $\class{(x,y)}\in\beta\alpha(A,F)$, then $\class{(x,y)}=\class{(x,x^*)}$. Indeed, since $(x,y)\to(x,x^*)=(x\rr x,$ $x\land x^*)=(1,0)$ and $(x,x^*)\to(x,y)=(x\rr x,x\land y)=(1,0)$, so by definition we have that $\class{(x,y)}=\class{(x,x^*)}$. Therefore $\eta_A(x)=\class{(x,y)}$.
    
   $\eta_A$ is injective:
        If $\class{(x,x^*)}=\class{(y,y^*)}$, then $(x,x^*)\to(y,y^*)=(x\rr y,x\land y^*)=(1,0)$. Therefore, from $x\rr y=1$, using properties of semi-Heyting algebras we obtain $x\leq y$. Analogously, $(y,y^*)\to(x,x^*)=$ $(y\rr x,y\land x^*)=(1,0)$ implies $y\leq x$.
        
    $\eta_A$ is a $\mathbf{sHF}$-morphism:
    \begin{itemize}
        \item $\eta_A(x)\cup\eta_A(y)=\class{(x,x^*)}\cup\class{(y,y^*)}=\class{(x,x^*)\sqcup(y,y^*)}=\class{(x\lor y,x^*\land y^*)}=\class{(x\lor y,$ $(x\lor y)^*)}=\eta_A(x\lor y)$.
        \item $\eta_A(x)\cap\eta_A(y)=\class{(x,x^*)}\cap\class{(y,y^*)}=\class{(x,x^*)\sqcap(y,y^*)}=\class{(x\land y,x^*\lor y^*)}$. Also, $\eta_A(x\land y)=\class{(x\land y,(x\land y)^*)}$. Since $(x\land y,x^*\lor y^*)\to(x\land y,(x\land y)^*)=((x\land y)\rr(x\land y),(x\land y)\land(x\land y)^*)=(1,0)$ and $(x\land y,(x\land y)^*)\to(x\land y,x^*\lor y^*)=((x\land y)\rr(x\land y),(x\land y)\land(x^*\lor y^*))=(1,(x\land y\land x^*)\lor(x\land y \land y^*))=(1,0)$, we have that $\eta_A(x)\cap\eta_A(y)=\eta_A(x\land y)$.
        \item $\eta_A(x)\rr \eta_A(y)=\class{(x,x^*)}\rr\class{(y,y^*)}=\class{(x,x^*)\rr(y,y^*)}=\class{(x\rr y,x\land y^*)}$. Also, $\eta_A(x\rr y)=\class{(x\rr y,(x\rr y)^*)}$. Since $(x\rr y,x\land y^*)\to(x\rr y,(x\rr y)^*)=((x\rr y)\rr(x\rr y),(x\rr y)\land(x\rr y)^*)=(1,0)$ and $(x\rr y,(x\rr y)^*)\to(x\rr y,x\land y^*)=((x\rr y)\rr(x\rr y),(x\rr y)\land (x\land y^*))\underset{SH\ref{infimoeimplicaSH})}{=}(1,x\land (y\land y^*))=(1,0)$, we have that $\class{(x\rr y,x\land y^*)}=\class{(x\rr y,(x\rr y)^*)}$. Therefore, we conclude that $\eta_A(x)\rr \eta_A(y)=\eta_A(x\rr y)$.
        \item $\eta_A(F)\subseteq \class{N(A,F)^+}$
            
        If $y\in\eta_A(F)$, then $y=\eta_A(x)$, for some $x\in F$. Hence, $y=\class{(x,x^*)}$, with $x\in F$.
        
        We observe that for $x\in F$, we have that $(x,0)\in N(A,F)$. Besides, $\class{(x,x^*)}=\class{(x,0)}$ because $(x,0)\to(x,x^*)=(x\rr x,x\land x^*)=(1,0)$ and $(x,x^*)\to(x,0)=(x\rr x,x\land 0)=(1,0)$.
            
        Therefore, we can write $y=\class{(x,0)}$. Now, since $\sim(x,0)=(0,x)\leq(x,0)$, it follows that $(x,0)\in N(A,F)^+$, and hence $y\in \class{N(A,F)^+}$.
    \end{itemize}
Finally, we check the naturality of $\eta$, that is, the following diagram commutes:
		\begin{displaymath}
		\xymatrix{ A \ar[rr]^{\eta_A}\ar[d]_{f} && \beta\alpha(A) \ar[d]^{\beta\alpha(f)} \\
			A' \ar[rr]^{\eta_{A'}} && \beta\alpha(A')  }
		\end{displaymath}
    
    Notice that since $\eta_A$ is an isomorphism, $\eta_A^{-1}(\class{(x,y)})=x$ for every $x\in A$. Then we have $$(\eta_{A'}^{-1}\circ \beta\alpha(f)\circ\eta_A)(x)=\eta_{A'}^{-1}(\beta\alpha(f)(\class{(x,0)}))=$$
    $$=\eta_{A'}^{-1}(\class{\alpha(f)(x,0)})=\eta_{A'}^{-1}(\class{f(x),0)}=f(x).$$
    
Thus $f=\eta_{A'}^{-1}\circ \beta\alpha(f)\circ\eta_A$, and therefore $\eta_{A'}\circ f=\beta\alpha(f)\circ\eta_A$.

Now we consider the functor $\alpha\beta$, which transforms every object $\mathbf A$ of $\mathbf{sN}$ into $\mathbf{ N(sH(A),\class{A^+})}$. On a morphism $f:A\longrightarrow A'$ it is defined by  $\alpha\beta(f)(\class{x},\class{y})=(\class{f(x)},\class{f(y)})$. We define for each object $\mathbf A$ of $\mathbf{sN}$, $\delta_A:A\longrightarrow \mathbf{ N(sH(A),\class{A^+})}$  by $\delta_A(x)=(\class{x},\class{\sim x})$.

{$\delta_A$ is well defined:}
        On the one hand, $\class{x}\cap\class{\sim x}=\class{x\land\sim x}=\class{0}$, because $(x\land\sim x)\to 0=(x\land\sim x)\to (x\land\sim x\land0)=(x\land\sim x)\ton 0=1$ and $0\ton (x\land\sim x)=1$ by well known properties of Nelson algebras.
    On the other hand, using $SN$\ref{identidad_kleene}) and De Morgan properties we have $\sim(x\lor\sim x)=\sim x\land x\leq x\lor\sim x$, so $x\lor\sim x\in A^+$. Therefore $\class{x}\cup\class{\sim x}=\class{x\lor\sim x}\in \class{A^+}$.

    {$\delta_A$ is surjective:}
        If $(\class{x},\class{y})\in\alpha\beta(A,F)$, then $(\class{x},\class{y})=(\class{x},\class{\sim x})$. Indeed, this is a consequence of $(\class{x},\class{y})\to(\class{x},\class{\sim x})=(\class{x}\rr \class{x},\class{x}\cap \class{\sim x})=(\class{1},\class{x\land\sim x})=(\class{1},\class{0})$ and $(\class{x},\class{\sim x})\to(\class{x},\class{y})=(\class{x}\rr \class{x},\class{x}\cap \class{y})=(\class{1},\class{0})$. Therefore $\delta_A(x)=(\class{x},\class{y})$.
    
    {$\delta_A$ is injective:}
        If $(\class{x},\class{\sim x})=(\class{y},\class{\sim y})$, then $\class{x}=\class{y}$ and $\class{\sim x}=\class{\sim y}$, so we have the following: $x\to y=1$, $y\to x=1$, $\sim x\to \sim y=1$, $\sim y\to \sim x=1$. By proposition \ref{propertiesseminelson} (\ref{equivalenciadeclasesnelson}), we have that $x\ton y=1$, $y\ton x=1$, $\sim x\ton \sim y=1$, $\sim y\ton \sim x=1$, which implies (see for example \cite{viglizzo99algebras}, (1.15)) that $x=y$.
    
    {$\delta_A$ is a $\mathbf{sN}$-morphism:}
    \begin{itemize}
        \item $\delta_A(x)\sqcup\delta_A(y)=(\class{x},\class{\sim x})\sqcup(\class{y},\class{\sim y})=(\class{x}\cup\class{y},\class{\sim x}\cap\class{\sim y})=(\class{x\lor y},\class{\sim x\land\sim y})=(\class{x\lor y},\class{\sim (x\lor y)})=\delta_A(x\lor y)$. In a similar manner we can prove that $\delta_A(x)\sqcap\delta_A(y)=\delta_A(x\land y)$.
        \item $\sim\delta_A(x)=\sim(\class{x},\class{\sim x})=(\class{\sim x},\class{x})=\delta_A(\sim x)$.
        \item $\delta_A(x)\to \delta_A(y)=(\class{x},\class{\sim x})\to(\class{y},\class{\sim y})=(\class{x}\rr\class{y},\class{x}\cap\class{\sim y})=(\class{x\to y},\class{x\land\sim y})$.
        
        By $SN$\ref{identidad_semi_neg_implica_impl}) and $SN$\ref{identidad_semi_impl_implica_neg}), we see that $\class{x\land\sim y}=\class{\sim(x\to y)}$. Hence $\delta_A(x)\to \delta_A(y)=(\class{x\to y},\class{x\land\sim y})=(\class{x\to y},\class{\sim(x\to y)})=\delta_A(x\to y)$.
    \end{itemize}
    
    Finally, $\delta$ is a natural transformation: to see that this diagram commutes, 
    	\begin{displaymath}
    	\xymatrix{ A \ar[rr]^{\delta_A}\ar[d]_{f} && \alpha\beta(A) \ar[d]^{\alpha\beta(f)} \\
    		A' \ar[rr]^{\delta_{A'}} && \alpha\beta(A')  }
    	\end{displaymath}
    we calculate
        $\alpha\beta(f)\circ\delta_A(x)=\alpha\beta(f)(\class{x},\class{\sim x})=(\class{f(x)},\class{f(\sim x)})=(\class{f(x)},\class{\sim f(x)})=\delta_{A'}\circ f(x)$.
\end{proof}

\begin{Example}\label{EJC1}
Consider the semi-Nelson algebra $\mathbf{A}$, with Hasse diagram indicated below, in which the operations $\to$ and $\sim$ are given by the following tables:
	
\begin{minipage}{6.4cm}
	\begin{center}
			\setlength\unitlength{1mm}
			\begin{picture}(90,60)(10,-10)
			\put(25,20){\circle{2}}
			\put(35,30){\circle{2}}
			\put(45,40){\circle{2}}
			\put(55,30){\circle{2}}
			\put(45,20){\circle{2}}
			\put(35,10){\circle{2}}
			\put(65,20){\circle{2}}
			\put(55,10){\circle{2}}
			\put(45,0){\circle{2}}
			
			\put(50,0){\makebox(0,0){$0$}}
			\put(40,10){\makebox(0,0){$a$}}
			\put(60,10){\makebox(0,0){$b$}}
			\put(30,20){\makebox(0,0){$d$}}
			\put(50,20){\makebox(0,0){$c$}}
			\put(70,20){\makebox(0,0){$e$}}
			\put(40,30){\makebox(0,0){$f$}}
			\put(60,30){\makebox(0,0){$g$}}
			\put(50,40){\makebox(0,0){$1$}}
			
			\put(46,21){\line(1,1){8}}
			\put(36,11){\line(1,1){8}}
			\put(26,21){\line(1,1){8}}
			\put(36,31){\line(1,1){8}}
			\put(46,1){\line(1,1){8}}
			\put(56,11){\line(1,1){8}}
			\put(34,11){\line(-1,1){8}}
			\put(44,21){\line(-1,1){8}}
			\put(54,31){\line(-1,1){8}}
			\put(44,1){\line(-1,1){8}}
			\put(54,11){\line(-1,1){8}}
			\put(64,21){\line(-1,1){8}}

			\put(30,37){\makebox(0,0){$A$}}
			\end{picture}
	\end{center}
\end{minipage}
\begin{minipage}{6cm}
	\begin{center}
			\begin{tabular}{c||c|c|c|c|c|c|c|c|c}
				$\to$ & $0$ & $a$ & $b$ & $c$ & $d$ & $e$ & $f$ & $g$ & $1$ \\ \hline\hline
				 $0$  & $1$ & $1$ & $1$ & $1$ & $g$ & $1$ & $g$ & $1$ & $g$ \\ \hline
				 $a$  & $1$ & $1$ & $1$ & $1$ & $g$ & $1$ & $g$ & $1$ & $g$ \\ \hline
				 $b$  & $1$ & $1$ & $1$ & $1$ & $g$ & $1$ & $g$ & $1$ & $g$ \\ \hline
				 $c$  & $1$ & $1$ & $1$ & $1$ & $g$ & $1$ & $g$ & $1$ & $g$ \\ \hline
				 $d$  & $e$ & $g$ & $e$ & $g$ & $1$ & $e$ & $1$ & $g$ & $1$ \\ \hline
				 $e$  & $d$ & $d$ & $f$ & $f$ & $a$ & $1$ & $c$ & $1$ & $g$ \\ \hline
				 $f$  & $e$ & $g$ & $e$ & $g$ & $1$ & $e$ & $1$ & $g$ & $1$ \\ \hline
				 $g$  & $d$ & $d$ & $f$ & $f$ & $a$ & $1$ & $c$ & $1$ & $g$ \\ \hline
				 $1$  & $0$ & $a$ & $b$ & $c$ & $d$ & $e$ & $f$ & $g$ & $1$
			\end{tabular}
	\end{center} 
\end{minipage}

	\begin{center}
		\begin{tabular}{c|ccccccccc}
			$x$&$0$&$a$&$b$&$c$&$d$&$e$&$f$&$g$&$1$\\\hline
			$\sim x$&$1$&$g$&$f$&$c$&$e$&$d$&$b$&$a$&$0$\\
		\end{tabular}
	\end{center} 

$S=\{0,a,d,e,g,1\}$ is the universe of a subalgebra $\mathbf{S}$ of $\ba$. Both $\mathbf{S}$ and $\ba$ have the same quotient semi-Nelson algebra $\mathbf{H}$ indicated below:

\begin{minipage}{6.5cm}
	\begin{center}
		\setlength\unitlength{1mm}
		\begin{picture}(90,60)(10,-10)
		\put(55,30){\circle{2}}
		\put(45,20){\circle{2}}
		\put(65,20){\circle{2}}
		\put(55,10){\circle{2}}
		
		\put(60,8){\makebox(0,0){$\class{0}$}}
		\put(41,20){\makebox(0,0){$\class{d}$}}
		\put(69,20){\makebox(0,0){$\class{e}$}}
		\put(60,32){\makebox(0,0){$\class{1}$}}
		
		\put(46,21){\line(1,1){8}}
		\put(56,11){\line(1,1){8}}
		\put(54,11){\line(-1,1){8}}
		\put(64,21){\line(-1,1){8}}
		
		\put(40,30){\makebox(0,0){$H$}}
		\end{picture}
	\end{center}
\end{minipage}
\begin{minipage}{6cm}
	\begin{center}
		\begin{tabular}{c||c|c|c|c}
			   $\rr$    & $\class{0}$ & $\class{d}$ & $\class{e}$ & $\class{1}$ \\ \hline\hline
			$\class{0}$ & $\class{1}$ & $\class{e}$ & $\class{1}$ & $\class{e}$ \\ \hline
			$\class{d}$ & $\class{e}$ & $\class{1}$ & $\class{e}$ & $\class{1}$ \\ \hline
			$\class{e}$ & $\class{d}$ & $\class{0}$ & $\class{1}$ & $\class{e}$ \\ \hline
			$\class{1}$ & $\class{0}$ & $\class{d}$ & $\class{e}$ & $\class{1}$ \\ 
		\end{tabular}
	\end{center} 
\end{minipage}

$E=\{\class{e},\class{1}\}$ is an i-filter of $\mathbf{H}$, and $\mathbf{N(H,E)}$ is isomorphic to $\mathbf{S}$, while $\mathbf{N(H,H)}=\mathbf{V_k(H)}$ is isomorphic to $\mathbf{A}$.
\end{Example}

\section{Dually Hemimorphic semi-Nelson Algebras}\label{duallyhemimorphicsection}

In this section we will show some results about the variety of dually hemimorphic semi-Heyting and semi-Nelson algebras, which were presented in definitions \ref{defdualsemiheyting} and \ref{defdualseminelson} respectively. We omit some proofs of the following results, since it can be founded in \cite{cornejo19dually}.

\begin{Lemma}
Let $\mathbf{A}\in\dsn$. If we consider the equivalence relation $\equiv$ defined in section \ref{quotients}, then that relation is compatible with respect of the operation $'$. Hence, $\equiv$ is a congruence in $\dsn$.
\end{Lemma}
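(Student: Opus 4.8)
The plan is to show that $\equiv$ is compatible with the operation $'$; since $\equiv$ is already known to be a congruence with respect to $\land$, $\lor$ and $\to$ (recalled in Section~\ref{quotients}), this is all that remains.

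First I would record the elementary fact that in a semi-Nelson algebra $\mathbf{A}$ the constant $1$ is the greatest element of the lattice reduct, so that $1\land a=a$ for every $a\in A$ (this is standard for the Nelson reduct $\langle A,\land,\lor,\ton,\sim,1\rangle$, whose lattice reduct coincides with that of $\mathbf{A}$). Then I would take $x,y\in A$ with $x\equiv y$, i.e.\ $x\to y=1$ and $y\to x=1$, so that $(x\to y)\land(y\to x)=1$, and substitute the pair $(x,y)$ into axiom $DSN$\ref{dhsn_3}):
\[
\big((x\to y)\land(y\to x)\land x'\big)\to\big((x\to y)\land(y\to x)\land y\big)'=1 .
\]
Because $(x\to y)\land(y\to x)=1$ and $1$ is the lattice top, the antecedent collapses to $x'$ and the argument of $'$ on the right-hand side collapses to $y$; hence the identity reads $x'\to y'=1$. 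Running the same substitution with the roles of $x$ and $y$ interchanged gives $y'\to x'=1$. Therefore $x'\equiv y'$, which establishes the compatibility of $\equiv$ with $'$; together with the already-known compatibility with $\land,\lor,\to$ this proves that $\equiv$ is a congruence in $\dsn$.

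I do not expect a genuine obstacle here: the proof is a direct substitution into $DSN$\ref{dhsn_3}). The only point that needs a little care is the normalization in that substitution, i.e.\ using that $1$ is the lattice top so that the three-fold meets in $DSN$\ref{dhsn_3}) reduce to $x'$ and to $y'$ respectively — note in particular that the right-hand side becomes the \emph{literal} element $y'$, so we never need $'$ to distribute over meets (axioms $DSN$\ref{dhsn_6})--$DSN$\ref{dhsn_7}) play no role in this argument).
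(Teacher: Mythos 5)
Your proof is correct: substituting a pair with $x\to y=1=y\to x$ into $DSN$\ref{dhsn_3}) and using that $1$ is the top of the lattice reduct (valid since the Nelson reduct $\langle A,\land,\lor,\ton,\sim,1\rangle$ is a Nelson algebra) collapses that axiom to $x'\to y'=1$, and the symmetric substitution gives $y'\to x'=1$, so $x'\equiv y'$. The paper itself omits the proof, deferring to \cite{cornejo19dually}, but $DSN$\ref{dhsn_3}) is precisely the axiom designed for this compatibility, so your direct verification is the intended argument.
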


In view of the previous lemma, we can define $$\class{x}\tic=\class{x'}$$ and moreover, it can be proved that $\mathbf{dsH}(A)=\langle A/\equiv;\cap,\cup,\rr,\tic,\bot,\top \rangle\in\dsh$.

\begin{Def}\label{def_op_vak_tic}
Let $\mathbf{A}=\langle A;\cap,\cup,\rr,\tic,\bot,\top \rangle\in\dsh$. For $(a,b)\in V_k(A)$ we define the following unary operation on $V_k(A)$: $$(a,b)'=(a\tic,b\cap(a\tic\rr a)).$$
\end{Def}

The operation is well defined. Indeed, using $SH\ref{infimoeimplicaSH}$) we have $a\tic\cap b\cap(a\tic\rr a)=b\cap a\tic\cap(a\tic\rr a)$ $=b\cap a\tic\cap a=0$.

\begin{Teo}
If $\mathbf{A}\in\dsh$, then the system $\mathbf{V_k(A)}=\langle V_k(A); \sqcap,\sqcup,\to,\sim,',\top\rangle$ is a dually hemimorphic semi-Nelson algebra.
\end{Teo}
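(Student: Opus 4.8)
The plan is to reduce the statement to verifying the seven identities DSN1)--DSN7) on $\mathbf{V_k(A)}$, leaning on the structural facts already established. First, note that the reduct $\langle V_k(A);\sqcap,\sqcup,\to,\sim,\top\rangle$ is already a semi-Nelson algebra: the $\tic$-free reduct of $\mathbf A$ is a semi-Heyting algebra, and we have recalled that applying Vakarelov's construction $\mathbf{V_k}$ to any semi-Heyting algebra yields a semi-Nelson algebra, so SN1)--SN11) hold with no further work. Second, $'$ is a well-defined unary operation on $V_k(A)$; this was checked just before the statement, using $a\land b=0$ for pairs $(a,b)\in V_k(A)$ together with (SH2). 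Since $'$ is only a hemimorphism and not a homomorphism, there is no shortcut through functoriality, so what remains is a direct check of DSN1)--DSN7).

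The verifications all follow one pattern. Each of DSN1)--DSN7) asserts $t=\top$ for a term $t$ in the operations (V1)--(V6) and $'$; unfolding those operations we get $t=(t_1,t_2)$, and $t=\top=(1,0)$ is equivalent to the two identities $t_1=1$ and $t_2=0$ in $\mathbf A$. The identity $t_2=0$ is discharged in every case by the same moves used in the well-definedness arguments: pushing meets through with (SH2) in the form $x\land(x\rr y)=x\land y$, invoking $a\land b=0$ for the pairs occurring in $t$, and using the pseudocomplement facts of Section~2. The identity $t_1=1$ is where the dual hemimorphism enters: one evaluates $0\tic$ and $1\tic$ with DSM1) and DSM2), rewrites $(u\land v)\tic$ as $u\tic\lor v\tic$ with DSM3), and reduces any implication whose two sides have been brought to a common first component using $z\rr z=1$ (SH4). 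As warm-ups, DSN1) and DSN2) are immediate: since $\sim 1=(0,1)$ and $1\rr 0=0$, we get $(\sim 1)'=(0\tic,\,1\land(0\tic\rr 0))=(1,\,1\land(1\rr 0))=(1,0)$; and $1'=(1\tic,\,0\land(1\tic\rr 1))=(0,0)$, hence $1'\to(\sim 1)=(0,0)\to(0,1)=(0\rr 0,\,0\land 1)=(1,0)$.

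For DSN6) and DSN7), with $x=(a,b)$ and $y=(c,d)$: the first component of $(x\land y)'$ is $(a\land c)\tic$, which by DSM3) equals $a\tic\lor c\tic$, the first component of $x'\lor y'$; the second components vanish by the usual bookkeeping, so both arrows evaluate to $\top$. DSN4) and DSN5) are of the same kind once $x'$, $\sim x'$ and $x'\to x$ are written out in components. The genuinely laborious case, and the one I expect to be the main obstacle, is DSN3): there one must unfold the common factor $(x\to y)\land(y\to x)$, meet it with $x'$ on the antecedent side and with $y'$ (inside an application of $'$) on the consequent side, and then pass the two results through an outer arrow, so controlling the first-component condition requires a fair amount of (SH2)--(SH3) telescoping. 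The point that makes DSN3) go through is that under the factor $(a\rr c)\land(c\rr a)$ the elements $a$ and $c$ become interchangeable --- concretely $(a\rr c)\land(c\rr a)\land a=a\land c=(a\rr c)\land(c\rr a)\land c$ by two applications of (SH2) --- so that, after DSM3) distributes $\tic$ over the nested meets, the relevant $\tic$-terms match up and the outer arrow collapses; keeping this $\tic$-bookkeeping consistent is the delicate part. Once it is done, the remaining second-component obligation is routine, and the verification is complete.
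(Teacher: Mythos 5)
Your overall plan---take the semi-Nelson reduct from the known Vakarelov result, use the already-checked closure of $V_k(A)$ under $'$, and verify DSN1)--DSN7) coordinatewise---is the natural one (the paper itself omits this proof and refers to \cite{cornejo19dually}), and your computations for DSN1), DSN2), DSN6), DSN7) are correct, as is the reduction of DSN4), DSN5) to first components of the form $w\rr w$. One simplification you overlook: once every operation is known to land in $V_k(A)$, the second-component obligations are automatic, because $s_1\land s_2=0$ forces $s_2=0$ whenever $s_1=1$; so only first components ever need to be computed.

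The genuine gap is exactly where you place it, DSN3), and the mechanism you propose does not close it. Write $q=(a\rr c)\land(c\rr a)$ for $x=(a,b)$, $y=(c,d)$. The first component of the antecedent is $q\land a\tic$, while that of the consequent is $(q\land c)\tic=q\tic\lor c\tic$ by DSM3): the prime sits on $x$ alone on the left but on the whole meet on the right, so these two elements do not ``match up'' and (SH4) cannot collapse the outer arrow (e.g.\ in the Heyting three-chain $0<m<1$ with the dual pseudocomplement, $a=m$, $c=1$ gives $q\land a\tic=m$ but $(q\land c)\tic=1$). What your interchangeability identity $q\land a=a\land c=q\land c$ really buys, after applying $\tic$ and DSM3), is $q\tic\lor a\tic=q\tic\lor c\tic$, hence only the inequality $q\land a\tic\le a\tic\le (q\land c)\tic$ between the two first components. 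That inequality suffices when the outer implication contributes the Heyting arrow $\toh$ in the first coordinate, i.e.\ when it is the Nelson implication $\ton$ (as in the source \cite{cornejo19dually}), but it does not suffice for the semi-Heyting $\rr$ produced by reading the outer $\to$ of DSN3) literally via (V3): in $\sH$, $u\le v$ does not yield $u\rr v=1$, and indeed in the two-element algebra of Example \ref{ejemplofiltronoifiltro} expanded with $0\tic=1$, $1\tic=0$, taking $x=(0,1)$, $y=(1,0)$ makes that first component equal to $0\rr 1=0$. So no amount of (SH2)--(SH3) telescoping and ``$\tic$-bookkeeping'' of the kind you describe can finish DSN3); the verification must pass through the displayed inequality together with the $\ton$/$\toh$ first coordinate, and this is the idea missing from your sketch.
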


In a similar manner as on section \ref{02022020}, we have the following representations:

\begin{Teo} \label{isoduallysemiHeytingsemiNelson} {\rm \cite{cornejo19dually} }
If $\mathbf{A}\in\dsh$, then $\mathbf{A}$ is isomorphic to $\mathbf{dsH}(\mathbf{V_k}(\mathbf A))$. Also, if $\mathbf{A}\in\dsn$, then $\mathbf{A}$ is isomorphic to a subalgebra of $\mathbf{V_k(dsH(A))}$.
\end{Teo}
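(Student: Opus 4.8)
The plan is to reuse the two maps from Theorem~\ref{isosemiHeytingsemiNelson}, namely $g(a)=\class{(a,a^*)}$ for the first assertion and $h(a)=(\class{a},\class{\sim a})$ for the second. That theorem already yields that $g$ is an isomorphism of semi-Heyting algebras and that $h$ is an injective semi-Nelson homomorphism onto a subalgebra of $\mathbf{V_k}(\mathbf{sH}(A))$; so in each case the only remaining work is to check compatibility with the new unary operation ($\tic$ on the first side, $'$ on the second), after noting that the targets are legitimate objects: $\mathbf{dsH}(A)\in\dsh$ as recalled at the start of this section, and $\mathbf{V_k}$ of a $\dsh$-algebra lies in $\dsn$ by the theorem immediately preceding.

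For the first assertion, let $\mathbf{A}\in\dsh$. I would first record the elementary fact that any two elements of $V_k(A)$ with the same first coordinate are $\equiv$-equivalent: for $(x,y),(x,y')\in V_k(A)$, by (V3) and $x\rr x=1$ one gets $(x,y)\to(x,y')=(x\rr x,\,x\land y')=(1,\,x\land y')=(1,0)=\top$, and symmetrically, so $(x,y)\equiv(x,y')$. Now compare the two sides of $g(a\tic)=g(a)\tic$. On one side $g(a\tic)=\class{(a\tic,(a\tic)^*)}$. On the other, by the definition of $\tic$ on a $\mathbf{dsH}$-quotient together with Definition~\ref{def_op_vak_tic}, $g(a)\tic=\class{(a,a^*)'}=\class{(a\tic,\,a^*\land(a\tic\rr a))}$. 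These two pairs share the first coordinate $a\tic$, hence determine the same $\equiv$-class; thus $g$ respects $\tic$, and being a bijective homomorphism it is an isomorphism of $\dsh$-algebras.

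For the second assertion, let $\mathbf{A}\in\dsn$ and set $S=h(A)$, a subalgebra of the semi-Nelson reduct of $\mathbf{V_k}(\mathbf{dsH}(A))$. It suffices to show $h(a')=h(a)'$: this both proves $S$ is closed under $'$ and makes $h$ an isomorphism of $\dsn$-algebras onto the subalgebra $S$. Unwinding $h(a)'=(\class{a},\class{\sim a})'$ with Definition~\ref{def_op_vak_tic} applied in $\mathbf{dsH}(A)$, using $\class{a}\tic=\class{a'}$ and the fact that the meet and implication of $\mathbf{sH}(A)$ are those induced from $\mathbf A$, one obtains $h(a)'=(\class{a'},\,\class{\sim a\land(a'\to a)})$, whereas $h(a')=(\class{a'},\class{\sim a'})$. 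Equality of these pairs reduces to $\sim a'\equiv\sim a\land(a'\to a)$, i.e.\ to $\sim a'\to(\sim a\land(a'\to a))=1$ together with $(\sim a\land(a'\to a))\to\sim a'=1$ — which are exactly the defining identities $DSN4)$ and $DSN5)$.

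I do not anticipate a genuine obstacle: the substance of this section (well-definedness of $'$ on $V_k$, and the two facts just cited that $\mathbf{dsH}(A)\in\dsh$ and $\mathbf{V_k}(\mathbf{dsH}(A))\in\dsn$) has already been set up, leaving only the two short verifications above. The two points requiring a little care are the observation that pairs of $V_k(A)$ with equal first coordinate collapse under $\equiv$, which is what drives the first computation, and keeping straight at each step which meet and implication — that of $\mathbf A$, of $\mathbf{sH}(\mathbf A)$, or of $\mathbf{V_k}$ — is in play; the fact that $DSN4)$ and $DSN5)$ are precisely the required identities makes the second verification essentially immediate.
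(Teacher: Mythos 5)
Your argument is correct: the paper itself states this theorem without proof (it is imported from \cite{cornejo19dually}), and your reduction to Theorem~\ref{isosemiHeytingsemiNelson} plus the check that $g$ and $h$ preserve the extra unary operation is exactly the natural route taken there. Both key verifications are sound --- pairs of $V_k(A)$ with equal first coordinate are indeed $\equiv$-equivalent since $a\land b=0$ for members of $V_k(A)$, and the required equivalence $\sim a'\equiv\sim a\land(a'\to a)$ is precisely DSN4) and DSN5).
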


Now we will generalize the results of section \ref{representations}. In \cite{cornejo18categorical} the authors introduced the category $\sN_c$ of centered semi-Nelson algebras whose objects are algebras $\mathbf{A}=\langle A; \land,\lor,\to,\sim,c,1 \rangle$ of type $(2,2,2,1,0,0)$ such that $c=\sim c$, and the morphisms are the algebra homomorphisms. The element $c$ is called a \emph{center} of the algebra $A$, and it is necessarily unique. We can expand the category $\sN_c$ to $\dsn_c$ of dually hemimorphic centered semi-Nelson algebras whose objects are algebras $\mathbf{A}=\langle A; \land,\lor,\to,\sim,',c,1 \rangle$ of type $(2,2,2,1,1,0,0)$ such that $c=\sim c$, and the morphisms are the algebra homomorphisms.

We will prove that the categories $\dsn$ and $\dsn_c$ actually coincide.

\begin{Prop}\label{duallyheminelsonarecentered}
If $\mathbf A\in\dsn$, then $\mathbf A$ is centered, with center $1'$.
\end{Prop}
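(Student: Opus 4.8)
The goal is to show that $1'$ is a center of $\mathbf{A}\in\dsn$, i.e.\ that $\sim(1')=1'$. I would use the representation of Theorem~\ref{isoduallysemiHeytingsemiNelson}: $\mathbf{A}$ embeds into $\mathbf{V_k(dsH(A))}$, and the embedding is the one inherited from Theorem~\ref{isosemiHeytingsemiNelson}, namely $h(x)=(\class{x},\class{\sim x})$, now additionally respecting $'$ because $\class{x}\tic=\class{x'}$ by the paragraph following the congruence lemma. Since $h$ is an injective homomorphism, it suffices to check the identity $\sim(h(1)') = h(1)'$ inside $\mathbf{V_k(dsH(A))}$, where $h(1)=(\class{1},\class{\sim 1})=(\top,\bot)$ in the notation of $\mathbf{dsH}(A)$.

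So the plan is: compute $h(1)'$ using Definition~\ref{def_op_vak_tic}. We have $h(1)=(\top,\bot)$, hence
\[
h(1)' = (\top\tic,\ \bot\cap(\top\tic\rr\top)) = (\bot,\ \bot\cap(\bot\rr\top)) = (\bot,\bot),
\]
using $\top\tic=\bot$ (this is $1\tic=0$, i.e.\ DSM2 in $\mathbf{dsH}(A)$) and that $\bot$ is the least element so $\bot\cap(\text{anything})=\bot$. Then apply $\sim$: $\sim(\bot,\bot)=(\bot,\bot)$ by (V4). Thus $\sim(h(1)')=h(1)'$. Since $h$ is injective and $h(x')=h(x)'$ (as $'$ is a fundamental operation preserved by the embedding of Theorem~\ref{isoduallysemiHeytingsemiNelson}) and $h(\sim x)=\sim h(x)$ by (V4), we get $h(\sim(1'))=\sim(h(1)')=h(1)'=h(1')$, hence $\sim(1')=1'$. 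This establishes that $1'$ is a center; uniqueness of centers is already known from \cite{cornejo18categorical}.

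The one point that needs care is the claim that $h$ commutes with $'$: strictly, Theorem~\ref{isoduallysemiHeytingsemiNelson} asserts $\mathbf{A}$ is isomorphic to a \emph{subalgebra} of $\mathbf{V_k(dsH(A))}$ in the signature including $'$, and one should confirm that the witnessing map is indeed $h(x)=(\class{x},\class{\sim x})$ with $h(x')=(\class{x'},\class{\sim x'})=(\class{x}\tic,\ \ldots)$ matching Definition~\ref{def_op_vak_tic}; this is exactly what \cite{cornejo19dually} provides, and it reduces to checking $\class{\sim x'}=\class{\sim x}\cap(\class{x}\tic\rr\class{x})$, which follows from DSN4 and DSN5 via Proposition~\ref{propertiesseminelson}. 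Alternatively, and perhaps more cleanly, one can avoid the embedding entirely and argue syntactically: $\sim(1')\to 1'$ and $1'\to\sim(1')$ both equal $1$ can be extracted from DSN1, DSN2 together with the semi-Nelson axioms (DSN2 gives $1'\to\sim 1=1$ and one manipulates DSN1 $(\sim 1)'=1$ using the De~Morgan behaviour of $'$ over $\land$, DSN6--DSN7, to flip it), and then Proposition~\ref{propertiesseminelson}(1) plus the characterization that $a\equiv b$ with the right shape forces $a=b$ yields $\sim(1')=1'$. The main obstacle in either route is bookkeeping the interaction of $'$ with $\sim$ and $\to$ through the $\equiv$-congruence; the $\mathbf{V_k}$-computation above is the shortest way through it.
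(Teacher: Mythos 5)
Your main argument is correct, but it takes a genuinely different route from the paper. The paper proves the proposition by a direct equational computation inside $\mathbf{A}$: it reduces $\sim 1'=1'$ to $\sim 1'\ton 1'=1$ and $1'\ton\sim 1'=1$ via Proposition \ref{propertiesseminelson}(1), and then derives these from \textit{(DSN\ref{dhsn_2})}, \textit{(DSN\ref{dhsn_4})}, \textit{(DSN\ref{dhsn_5})} (instantiated at $x=1$, using $\sim 1=0$) together with Nelson-algebra facts about $0$ and the transitivity in Proposition \ref{propertiesseminelson}(\ref{transitivitynelson}). You instead pass through the embedding $h(x)=(\class{x},\class{\sim x})$ of Theorem \ref{isoduallysemiHeytingsemiNelson}, compute $h(1)'=(\bot,\bot)$ from Definition \ref{def_op_vak_tic} and DSM\ref{op_tilde_1}), observe $(\bot,\bot)$ is a fixed point of $\sim$, and pull back by injectivity. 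This is sound and not circular: the embedding (in the full signature, hence commuting with $'$) is a result of \cite{cornejo19dually} stated before the proposition, and your remark that its compatibility with $'$ amounts to $\class{\sim x'}=\class{\sim x}\cap(\class{x}\tic\rr\class{x})$, i.e.\ to DSN\ref{dhsn_4})--DSN\ref{dhsn_5}) modulo $\equiv$, is exactly right. What each approach buys: the paper's proof is self-contained and purely syntactic, needing nothing beyond the axioms and Proposition \ref{propertiesseminelson}; yours is shorter and conceptually transparent (the center is forced because the twist algebra has the center $(\bot,\bot)$), at the cost of invoking the representation theorem, whose $'$-preservation hides essentially the same use of DSN\ref{dhsn_4})--DSN\ref{dhsn_5}). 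Your closing ``syntactic alternative'' sketch is vaguer and slightly off target: the direct proof does not need DSN\ref{dhsn_1}), DSN\ref{dhsn_6})--DSN\ref{dhsn_7}) or De Morgan behaviour of $'$, only DSN\ref{dhsn_2}), DSN\ref{dhsn_4}), DSN\ref{dhsn_5}); but since your primary route is complete, this is only a cosmetic point.
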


\begin{proof}
We want to prove that $\sim 1'=1'$, or equivalently, the following conditions holds simultaneously:
\[
    \sim 1'\to1'=1,\
    1'\to\sim1'=1,
\]
\begin{equation}\label{19922020_5}
    \sim1'\to\sim(\sim1')=1,\
    \sim(\sim1')\to\sim1'=1.
\end{equation}

By Proposition \ref{propertiesseminelson} (\ref{equivalenciadeclasesnelson}), (\ref{19922020_5}) is equivalent to
\[
    \sim 1'\ton1'=1,\
    1'\ton\sim1'=1,\
\]
\[
    \sim1'\ton\sim(\sim1')=1,\
    \sim(\sim1')\ton\sim1'=1,
\]
which is equivalent to show that
\begin{equation}\label{19922020_6}
    \sim 1'\ton1'=1,
\end{equation}
\begin{equation}\label{19922020_7}
    1'\ton\sim1'=1.
\end{equation}

By \textit{(DSN\ref{dhsn_4})}, \textit{(DSN\ref{dhsn_5})}, and proposition \ref{equivalenciadeclasesnelson}, we have that $\sim 1'\ton(\sim 1\land(1'\to 1))=1$, that is (1) $\sim 1'\ton0=1$. Since in every Nelson algebra $0\ton x=x$ holds for all $x$, it follows that (2) $0\ton 1'=1'$. By (1), (2) and proposition \ref{propertiesseminelson} (\ref{transitivitynelson}), we have that $\sim 1'\ton1'=1$, which proves \eqref{19922020_6}.

By \textit{(DSN\ref{dhsn_2})} we have (3) $1=1'\to \sim1=1'\to0=1'\to(1'\land0)=1'\ton0$. Using again that $0\ton 1'=1'$, and proposition \ref{propertiesseminelson} (\ref{transitivitynelson}), gives us $\sim 1'\ton1=1$, which proves \eqref{19922020_7}.
\end{proof}

\vspace{3mm}

Proposition \ref{duallyheminelsonarecentered} shows us that the category $\dsn$ it can be viewed as the category $\dsn_c$ with a distinguished element $c$, but despite the change of the language, both categories are essentially the same.

It was proved in \cite{cornejo18categorical} that there is a categorial equivalence between semi-Heyting algebras and centered semi-Nelson algebras. Moreover, the function $h$ defined in the proof of Theorem \ref{isoduallysemiHeytingsemiNelson} is an isomorphism. This gives us the following:

\begin{Teo} \label{isoduallysemiNelson}
If $\mathbf{A}\in\dsn$, then $\mathbf{A}$ is isomorphic to $\mathbf{V_k(dsH(A))}$.
\end{Teo}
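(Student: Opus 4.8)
The plan is to leverage Theorem \ref{isoduallysemiHeytingsemiNelson}, which already tells us that any $\mathbf{A}\in\dsn$ embeds into $\mathbf{V_k(dsH(A))}$ via the map $h(a)=(\class{a},\class{\sim a})$. To upgrade this embedding to an isomorphism, it suffices to show that $h$ is surjective, i.e. that every pair $(\class{x},\class{y})\in V_k(\mathbf{dsH}(A))$ — that is, every pair with $\class{x}\cap\class{y}=\class{0}$ — is of the form $(\class{a},\class{\sim a})$ for some $a\in A$. This is exactly the content of the $\mathbf{sHF}$ machinery specialized to the centered case: in the general semi-Nelson setting the obstruction to surjectivity is measured by the $i$-filter $\class{A^+}$, and surjectivity holds precisely when that filter is all of $\mathbf{sH}(A)$.

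The key observation, supplied by Proposition \ref{duallyheminelsonarecentered}, is that every $\mathbf{A}\in\dsn$ is centered with center $c=1'$. First I would recall from \cite{cornejo18categorical} that for a \emph{centered} semi-Nelson algebra, the set of positive elements $A^+=\{x\lor\sim x : x\in A\}$ satisfies $\class{A^+}=\mathbf{sH}(A)$ — equivalently, that the map $h$ of Theorem \ref{isosemiHeytingsemiNelson} (and of Theorem \ref{isoduallysemiHeytingsemiNelson}) is onto. Concretely, given any $a\in A$, one writes $a$ as the join of a positive element and the negation of a positive element using the center: the standard fact is that $a = (a\lor c)\land\sim(\sim a\lor c)$ or a similar identity involving $c$, from which one extracts, for any target pair $(\class{x},\class{y})$ with $x\land y\equiv 0$, a preimage $a$ with $\class{a}=\class{x}$ and $\class{\sim a}=\class{y}$. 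Since this argument is carried out in \cite{cornejo18categorical} for $\sN_c$, and by Proposition \ref{duallyheminelsonarecentered} every $\mathbf{A}\in\dsn$ lives in $\sN_c$, the map $h$ is an isomorphism of semi-Nelson algebras.

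It then remains to check that this isomorphism respects the extra unary operation $'$ on the $\dsn$ side and the induced operation on $\mathbf{V_k(dsH(A))}$ coming from $\tic$ via Definition \ref{def_op_vak_tic}. For this I would verify directly that $h(a') = h(a)'$, i.e. that
\[
(\class{a'},\class{\sim a'}) = \bigl(\class{a}\tic,\ \class{\sim a}\cap(\class{a}\tic\rr\class{a})\bigr)
= \bigl(\class{a'},\ \class{\sim a}\cap\class{a'\to a}\bigr).
\]
The first components agree by the definition $\class{a}\tic=\class{a'}$. For the second components, one must show $\class{\sim a'} = \class{\sim a \land (a'\to a)}$, which is precisely the combination of axioms (DSN\ref{dhsn_4}) and (DSN\ref{dhsn_5}) together with Proposition \ref{propertiesseminelson}(\ref{equivalenciadeclasesnelson}): those two axioms say $\sim a'\ton(\sim a\land(a'\to a))=1$ and $(\sim a\land(a'\to a))\ton\sim a'=1$, so $\sim a'\equiv\sim a\land(a'\to a)$. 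Hence $h$ preserves $'$ as well, and combined with the preceding step it is an isomorphism in the full language of $\dsn$.

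The main obstacle I anticipate is not the verification that $'$ is preserved — that is essentially a bookkeeping check against the axioms DSN4)--DSN5) — but rather making precise and citable the surjectivity of $h$ in the centered case. One has to be careful that the categorial equivalence of \cite{cornejo18categorical} between $\sH$ and $\sN_c$ really does amount to $h$ being a bijection (and not merely that some abstract functors compose to the identity up to natural isomorphism), and that the center used there coincides, under $h$, with $1'$. Once that identification is pinned down, everything else is routine.
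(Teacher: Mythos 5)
Your proposal is correct and follows essentially the same route as the paper: Proposition \ref{duallyheminelsonarecentered} supplies the center $1'$, and the result of \cite{cornejo18categorical} that $h$ is onto for centered semi-Nelson algebras upgrades the embedding of Theorem \ref{isoduallysemiHeytingsemiNelson} to an isomorphism. Your explicit verification that $h$ preserves $'$ (via DSN4)--DSN5), which are stated with $\to$, not $\ton$) is fine but already contained in Theorem \ref{isoduallysemiHeytingsemiNelson}, and the self-contained surjectivity argument you only gesture at --- the $i$-filter of Lemma \ref{16012020} being forced to be all of $\mathbf{dsH}(A)$ because the center $(\class{1},\class{0})'=(\class{0},\class{0})$ lies in $h(A)$ --- is exactly what the paper's Observation following the theorem carries out.
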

\vspace{1mm}
\begin{Obs}
Another way of proving Theorem \ref{isoduallysemiNelson} is the following: if we consider the structure $\mathbf{N(H,E)}$ as in Lemma \ref{16012020}, and take $\mathbf{H}\in\dsh$, we have no guarantee that this set is going to be a closed under the operation $^\dagger$. If instead we consider a subalgebra $\mathbf S$ such that $\pi_1(S)=H$, then the proof indicated in the lemma still holds for the variety $\dsh$, since it does not use the operations $'$ or $\tic$. Hence, the sets $S$ and $N(H,E)$ are equal, so $N(H,E)$ inherits the operations from $\mathbf S$. Indeed, due to Theorem \ref{isoduallysemiHeytingsemiNelson}, we know that the function $h:A\to \mathbf{V_k}(\mathbf{dsH}(A))$ defined by $h(a)=(\class{a},\class{\sim a})$ is a monomorphism. Hence, $A\cong h(A)$. Let us take then the subalgebra $\mathbf S=h(\mathbf A)$ of $\mathbf{V_k}(\mathbf{dsH}(\mathbf A))$. It is immediate that $\pi_1(S)=\mathbf{dsH}(A)$. Therefore, by Lemma \ref{16012020}, $h(\mathbf  A)=\mathbf{N(dsH(A),E)}$ for $E=\{\class{x}\in \mathbf{dsH}(A):\class{x}=\class{a}\lor \class{b},(\class{a},\class{b})\in S\}$.

Since $\mathbf S$ is a subalgebra of $\mathbf{V_k}(\mathbf{dsH}(\mathbf A))$, we have that $(\class{1},\class{0})\in S$, so the center $(\class{0},\class{0})=(\class{1},\class{0})'$ is an element of $S$. Then $\class{0}=\class{0}\lor\class{0}\in E$. Since $E$ is a filter, this shows that $E=\mathbf{dsH}(A)$.

We have proved that $S=N(\mathbf{dsH}(A),\mathbf{dsH}(A))=\mathbf{V_k}(\mathbf{dsH}(A))$, and therefore $h$ is an isomorphism.
\end{Obs}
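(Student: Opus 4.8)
The plan is to upgrade the embedding supplied by Theorem~\ref{isoduallysemiHeytingsemiNelson} into an isomorphism, exploiting that $\mathbf A$ is centered. By that theorem the map $h\colon\mathbf A\to\mathbf{V_k(dsH(A))}$, $h(a)=(\class a,\class{\sim a})$, is a monomorphism in $\dsn$, so the whole matter reduces to showing that $h$ is surjective: a $\dsn$-homomorphism whose underlying function is a bijection is automatically a $\dsn$-isomorphism.

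For surjectivity I would combine Proposition~\ref{duallyheminelsonarecentered}, which tells us that every $\mathbf A\in\dsn$ has a center, namely $1'$, with the categorial equivalence between $\sH$ and the category $\sN_c$ of centered semi-Nelson algebras established in \cite{cornejo18categorical}. In that equivalence the functor $\sH\to\sN_c$ is Vakarelov's construction $\mathbf H\mapsto\mathbf{V_k(H)}$, whose center is the unique $\sim$-fixed point of $\mathbf{V_k(H)}$, and the functor $\sN_c\to\sH$ is the quotient $\mathbf A\mapsto\mathbf{sH(A)}$; the canonical isomorphism $\mathbf A\cong\mathbf{V_k(sH(A))}$ it provides for a centered $\mathbf A$ is exactly $h$, read on the semi-Nelson reducts. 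Hence $h$ is a bijection, which is all that was missing.

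An alternative, more hands-on route to surjectivity uses Lemma~\ref{16012020}: put $\mathbf S=h(\mathbf A)$, a subalgebra of $\mathbf{V_k(dsH(A))}$ with $\pi_1(S)=\mathbf{dsH}(A)$, so the lemma produces an $i$-filter $E$ of the semi-Heyting reduct of $\mathbf{dsH}(A)$ with $S=N(\mathbf{dsH}(A),E)$. Being a subalgebra, $\mathbf S$ contains the top $\top=(\class 1,\class 0)$ and is closed under $'$; evaluating $(\class 1,\class 0)'$ with Definition~\ref{def_op_vak_tic} and the identity $1\tic=0$, valid in $\mathbf{dsH}(A)\in\dsh$, yields $(\class 1,\class 0)'=(\class 0,\class 0)$, so $\class 0=\class 0\lor\class 0\in E$. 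A lattice filter containing the least element is the whole algebra, hence $E=\mathbf{dsH}(A)$ and $N(\mathbf{dsH}(A),\mathbf{dsH}(A))=V_k(\mathbf{dsH}(A))$; thus $S$ exhausts the carrier and, being a subalgebra, $\mathbf S=\mathbf{V_k(dsH(A))}$.

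The difficulty is less conceptual than one of transfer of signature. The tools being imported --- the $\sH$/$\sN_c$ equivalence and the construction $N(H,E)$ --- are stated for the bare semi-Heyting and semi-Nelson languages, so one must check that $h$, given by the same formula, also preserves the new operation $'$ (this is part of Theorem~\ref{isoduallysemiHeytingsemiNelson}) and, in the hands-on version, that $N(\mathbf{dsH}(A),\mathbf{dsH}(A))$ really equals all of $V_k(\mathbf{dsH}(A))$, so that a subalgebra filling it up must be the whole algebra. The single step that genuinely uses the dual hemimorphism, rather than pure lattice- or semi-Nelson-theoretic bookkeeping, is the identity $(\class 1,\class 0)'=(\class 0,\class 0)$ --- equivalently, that the center $1'$ of $\mathbf A$ is sent to the bottom of $\mathbf{dsH}(A)$ --- and that is precisely where Proposition~\ref{duallyheminelsonarecentered} and the axioms DSM1)--DSM3) of $\mathbf{dsH}(A)$ come into play.
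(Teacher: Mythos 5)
Your proposal is correct and its ``hands-on'' route is essentially the paper's own argument in this Observation: take $\mathbf S=h(\mathbf A)$, apply Lemma~\ref{16012020} to write $S=N(\mathbf{dsH}(A),E)$, compute $(\class{1},\class{0})'=(\class{0},\class{0})$ via Definition~\ref{def_op_vak_tic} and $1\tic=0$ to get $\class{0}\in E$, conclude $E=\mathbf{dsH}(A)$ and hence $S=\mathbf{V_k(dsH(}A\mathbf{))}$. Your first route (centeredness via Proposition~\ref{duallyheminelsonarecentered} plus the $\sH$/$\sN_c$ equivalence of \cite{cornejo18categorical}) is precisely the paper's primary proof of Theorem~\ref{isoduallysemiNelson}, to which this Observation is offered as the alternative, so both of your arguments are sound and align with the text.
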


\begin{Example}\label{EJC2}
Consider the semi-Nelson algebras $\mathbf{A}$ and its subalgebra  $\mathbf{S}$ from Example \ref{EJC1}. We know by the results above that $\mathbf{S}$ cannot be expanded by adding a unary operation $'$ in such a way that the system $\mathbf{B}=\langle B; \land,\lor,\to,\sim,',1 \rangle$ becomes a dually hemimorphic semi-Nelson algebra, since $\mathbf{S}$ has no center.

On the other hand, any operation $'$ that makes $\mathbf{sH(A)}$ into a dually hemimorphic semi-Heyting algebra yields an operation $^\dagger$ that will turn $A$ into a dually hemimorphic semi-Nelson algebra.
\end{Example}

\bibliographystyle{alpha}

\begin{thebibliography}{ACDV13}
	
	\bibitem[ACDV13]{Abad2013semi}
	Manuel Abad, Juan~Manuel Cornejo, and Jose~Patricio Diaz~Varela.
	\newblock Semi-heyting algebras term-equivalent to g\"{o}del algebras.
	\newblock {\em Order}, (2):625--642, 2013.
	
	\bibitem[BD74]{balbes1974distributive}
	Raymond Balbes and Philip Dwinger.
	\newblock {\em Distributive lattices}.
	\newblock University of Missouri Press, Columbia, Mo., 1974.
	
	\bibitem[CSM18]{cornejo18categorical}
	Juan~Manuel Cornejo and Hern\'an~Javier San~Martín.
	\newblock {A categorical equivalence between semi-Heyting algebras and centered
		semi-Nelson algebras}.
	\newblock {\em Logic Journal of the IGPL}, 26(4):408--428, 04 2018.
	
	\bibitem[CSM19]{cornejo19dually}
	Juan~Manuel Cornejo and Hern\'an~Javier San~Mart\'in.
	\newblock {Dually hemimorphic semi-Nelson algebras}.
	\newblock {\em Logic Journal of the IGPL}, 11 2019.
	\newblock jzz030.
	
	\bibitem[CV16]{cornejo16semiNelson}
	Juan~M. Cornejo and Ignacio~D. Viglizzo.
	\newblock Semi-nelson algebras.
	\newblock {\em Order}, 2016.
	\newblock DOI: 10.1007/s11083-016-9416-x.
	
	\bibitem[Kal58]{kalman58lattices}
	J.~A. Kalman.
	\newblock Lattices with involution.
	\newblock {\em Trans. Amer. Math. Soc.}, 87:485--491, 1958.
	
	\bibitem[MM96]{monteiro96axiomes}
	Ant{\'o}nio Monteiro and Luiz Monteiro.
	\newblock Axiomes ind\'ependants pour les alg\`ebres de {N}elson, de \l
	ukasiewicz trivalentes, de {D}e {M}organ et de {K}leene.
	\newblock In {\em Unpublished papers, {I}}, volume~40 of {\em Notas L\'ogica
		Mat.}, page~13. Univ. Nac. del Sur, Bah\'\i a Blanca, 1996.
	
	\bibitem[MV19]{monteiro19construction}
	Luiz~F. Monteiro and Ignacio~D. Viglizzo.
	\newblock Construction of nelson algebras.
	\newblock In {\em Actas del XIV Congreso Dr. Antonio A.~R.~Monteiro (2017)},
	Actas Congr. ``Dr. Antonio A. R. Monteiro'', pages 63--71, Bah\'\i a Blanca,
	2019. Univ. Nac. del Sur.
	
	\bibitem[San85]{sankappanavar1985semi}
	Hanamantagouda~P. Sankappanavar.
	\newblock Semi-{H}eyting algebras.
	\newblock {\em Amer. Math. Soc. Abstracts}, page~13, January 1985.
	
	\bibitem[San08]{sankappanavar2008semi}
	Hanamantagouda~P. Sankappanavar.
	\newblock Semi-{H}eyting algebras: an abstraction from {H}eyting algebras.
	\newblock In {\em Proceedings of the 9th ``{D}r. {A}ntonio {A}. {R}.
		{M}onteiro'' {C}ongress ({S}panish)}, Actas Congr. ``Dr. Antonio A. R.
	Monteiro'', pages 33--66, Bah\'\i a Blanca, 2008. Univ. Nac. del Sur.
	
	\bibitem[San11]{sankappanavar2011expansions}
	Hanamantagouda~P. Sankappanavar.
	\newblock Expansions of semi-{H}eyting algebras {I}: {D}iscriminator varieties.
	\newblock {\em Studia Logica}, 98(1-2):27--81, 2011.
	
	\bibitem[Sen84]{sendlewski1984investigations}
	Andrzej Sendlewski.
	\newblock Some investigations of varieties of $n$-lattices.
	\newblock {\em Studia Logica}, 43(3):257--280, 1984.
	
	\bibitem[Sen90]{sendlewski90nelson}
	Andrzej Sendlewski.
	\newblock Nelson algebras through {H}eyting ones. {I}.
	\newblock {\em Studia Logica}, 49(1):105--126, 1990.
	
	\bibitem[Vak77]{vakarelov77Nlattices}
	D.~Vakarelov.
	\newblock Notes on {${\cal N}$}-lattices and constructive logic with strong
	negation.
	\newblock {\em Studia Logica}, 36(1--2):109--125, 1977.
	
	\bibitem[Vig99]{viglizzo99algebras}
	Ignacio Viglizzo.
	\newblock {\em \'Algebras de Nelson}.
	\newblock Instituto de Matem\'atica de Bah\'ia Blanca, Universidad Nacional del
	Sur, 1999.
	\newblock Magister dissertation in Mathematics, Universidad Nacional del Sur,
	Bah{\'{\i}}a Blanca, available at {\tt
		https://sites.google.com/site/viglizzo/viglizzo99nelson}.
	
\end{thebibliography}

\end{document}